\newtheorem{theorem}{Theorem}[section]
\newtheorem{proposition}[theorem]{Proposition}
\newtheorem{lemma}[theorem]{Lemma}
\newtheorem{corollary}[theorem]{Corollary}
\newtheorem{remark}[theorem]{Remark}
\newtheorem{variant}[theorem]{Variant}
\title{Rational equivalence of $0$-cycles on $K3$ surfaces and  conjectures of Huybrechts and O'Grady}
\author{Claire Voisin
\\CNRS, \'{E}cole Polytechnique}
\date{}
\newfont{\gothic}{eufb10}
\begin{document}
\maketitle

\begin{flushright}
{\it Pour Rob Lazarsfeld, \`{a} l'occasion\\ de son soixanti\`{e}me anniversaire}
\end{flushright}
\begin{abstract}
We give a new interpretation of O'Grady's filtration on the
$CH_0$ group
 of a $K3$ surface. In particular, we get a new characterization of the
 canonical $0$-cycles $kc_X$ :  given $k\geq0$, $kc_X$ is the only $0$-cycle of degree $k$ on $X$
 whose orbit under rational equivalence is of dimension
 $k$. Using this, we extend results of Huybrechts and O'Grady
 concerning Chern classes of simple vector bundles on $K3$ surfaces.
 \end{abstract}

\section{Introduction}
Let $X$ be a  projective $K3$ surface. In \cite{beauvoisin}, Beauville and the author
proved that $X$ carries a canonical $0$-cycle $c_X$ of degree $1$,
which is the class in $CH_0(X)$ of any point of $X$ lying on a (possibly singular)
rational curve on $X$.
This cycle has the property that
for any divisors $D,\,D'$ on $X$,
we have
$$D\cdot D'={\rm deg} (D\cdot D')\,c_X\,\,{\rm in}\,\,CH_0(X).$$
In recent works of Huybrechts  \cite{huybrechts} and O'Grady \cite{ogrady}, this
$0$-cycle appeared to have other characterizations. Huybrechts proves for example the following result (which is proved in \cite{huybrechts}
to have much more general consequences on  spherical objects and autoequivalences of the derived category of $X$):
\begin{theorem}\label{theohuybrechts}(Huybrechts \cite{huybrechts})
 Let $X$ be a projective complex
 $K3$ surface. Let ${F}$ be a  simple vector bundle on $X$ such that $H^1( {\rm End}\,F)=0$ (such an $F$ is called spherical in \cite{huybrechts}).
  Then $c_2({F})$ is
proportional to $c_X$ in $CH_0(X)$ if one of the following conditions holds.
\begin{enumerate}
\item  The Picard number of $X$ is at least $2$.

\item  The Picard group of $X$ is $\mathbf{Z}H$ and the determinant of $F$ is
equal to $k H$ with $k=\pm1\,\,{\rm mod}. \,\,r ={\rm rank}\,F$.
\end{enumerate}
\end{theorem}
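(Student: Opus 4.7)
The plan is to apply the paper's new characterization of $k\,c_X$: an effective $0$-cycle $z$ of degree $k$ on $X$ equals $k\,c_X$ in $CH_0(X)$ exactly when its orbit under rational equivalence has dimension $k$. Setting $k=\int_X c_2(F)$, it suffices to exhibit a $k$-dimensional family of effective $0$-cycles all rationally equivalent to $c_2(F)$; the opposite inequality on the orbit dimension is Mumford's bound. A preliminary twisting reduction is useful: for any line bundle $L$ on $X$,
\[ c_2(F\otimes L)=c_2(F)+(r-1)c_1(F)\cdot L+\binom{r}{2}L^2\quad\text{in }CH_0(X), \]
and the Beauville--Voisin relation makes the correction terms proportional to $c_X$, so one can freely replace $F$ by $F\otimes L$ with $L$ as positive as required. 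Moreover the quantity $2r\,c_2-(r-1)c_1^2$ is invariant under such twists, and equals $2(r^2-1)$ by the sphericity hypothesis $\chi(\mathrm{End}\,F)=2$.

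With $F$ now very positive, I would represent $c_2(F)$ concretely as a Porteous degeneracy cycle. Generic $r-1$ sections $s_1,\ldots,s_{r-1}\in H^0(F)$ form a saturated injection $\phi\colon\mathcal{O}_X^{\,r-1}\hookrightarrow F$ whose cokernel has the shape $\mathcal{I}_{Z(\phi)}\otimes\det F$, where $Z(\phi)$ is a $0$-dimensional subscheme of length $k$ and of class $c_2(F)$ by the Whitney formula. Let $T\subset H^0(F)^{\,r-1}$ be the open locus where this holds. Since $T$ is open in affine space, it is rational, so any two points of $T$ can be joined by a rational curve in $T$; pulling the relative degeneracy cycle back over such a curve exhibits $Z(\phi)$ and $Z(\phi')$ as rationally equivalent in $X$. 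Hence all the classes $[Z(\phi)]\in CH_0(X)$, $\phi\in T$, are equal, and the image of $T\to\mathrm{Hilb}^k(X)$ is a family of $0$-cycles in a single rational-equivalence orbit.

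A direct dimension count is what closes the argument. The $GL_{r-1}$-action reparametrizing the $s_i$ gives a generic fibre of $T\to\mathrm{Hilb}^k(X)$ of dimension $(r-1)^2$, so the image has dimension $(r-1)(h^0(F)-(r-1))$. With $L$ twisted so positively that $h^i(F)=0$ for $i>0$, Riemann--Roch gives $h^0(F)=2r+\tfrac{1}{2}c_1(F)^2-c_2(F)$, and a short rearrangement shows that the inequality $(r-1)(h^0(F)-r+1)\geq c_2(F)$ is equivalent to $2r\,c_2(F)-(r-1)c_1(F)^2\leq 2(r^2-1)$, an equality under sphericity. The image of $T$ therefore has dimension exactly $k$, so by Mumford the orbit of $c_2(F)$ has dimension $k$, and the characterization yields $c_2(F)=k\,c_X$.

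The step I expect to be the main obstacle is the assertion that the generic fibre of $T\to\mathrm{Hilb}^k(X)$ really is of dimension $(r-1)^2$: if $F$ admitted many distinct rank-$(r-1)$ subsheaves with the same $0$-dimensional torsion quotient, the fibres would be larger and the dimension count would fall short. This is a rigidity property of the moduli of subsheaves of $F$, and I expect the two hypotheses of Theorem~\ref{theohuybrechts} to enter at precisely this point. In case~(1) a second independent Picard class provides enough room to rule out coincidences among subsheaves by an appropriate twist; in case~(2) the congruence $k\equiv\pm1\pmod{r}$ enforces a coprimality between $\det F$ and the rank that similarly rigidifies the relevant subsheaf moduli. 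Making this rigidity precise, rather than the Chern-class bookkeeping, seems to be the real work.
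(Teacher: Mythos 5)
Your overall strategy is the same as the paper's: twist $F$ so that it is globally generated with $H^1(X,F^*)=0$, realize $c_2(F)$ as the degeneracy cycle $Z_W$ of a general $(r-1)$-dimensional space of sections $W\subset H^0(X,F)$ (Lemma \ref{necessary}), observe that all these cycles lie in a single rational equivalence class because the parameter space is rational, and then compare the dimension $(r-1)\bigl(h^0(F)-r+1\bigr)$ of the resulting family in $X^{(k)}$ with $k=c_2^{top}(F)$ using $\chi({\rm End}\,F)=2$; your Chern-class and Riemann--Roch bookkeeping is correct and matches formulas (\ref{for0})--(\ref{for3bis}). The genuine gap is precisely the step you flag and leave open: you do not prove that the generic fibre of $T\rightarrow X^{[k]}$ is a single $GL_{r-1}$-orbit, i.e.\ that $W\mapsto Z_W$ is generically injective on the Grassmannian. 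Without this, the orbit of $c_2(F)$ is only known to have dimension at least $\dim\,{\rm Im}\,\phi$, which could fall short of $k$, and Corollary \ref{corochar} cannot be invoked. Moreover your diagnosis of how to fill the gap is off target: the hypotheses (1) and (2) of the statement play no role there. The paper's Proposition \ref{propinj} closes the gap using only simplicity of $F$ and the vanishing $H^1(X,F^*)=0$ (which the twist already guarantees): a subspace $W'$ with $Z_{W'}=Z_W=:Z$ determines a morphism $t_{W'}:F\rightarrow \mathcal{I}_Z\otimes\det F$ from which $W'$ is recovered up to scalar as $H^0({\rm Ker}\,t_{W'})$; tensoring the exact sequence $0\rightarrow W\otimes\mathcal{O}_X\rightarrow F\rightarrow \mathcal{I}_Z\otimes\det F\rightarrow 0$ by $F^*$ shows that ${\rm Hom}(F,F)\rightarrow {\rm Hom}(F,\mathcal{I}_Z\otimes\det F)$ is surjective, and simplicity forces the target to be one-dimensional, whence $W'=W$.

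A consequence you should take away is that conditions (1) and (2) are not merely unnecessary for this step --- they are unnecessary for the theorem, which is exactly the content of the paper's Theorem \ref{ogrady} specialized to $d(v)=0$: for \emph{every} simple vector bundle $F$ with $H^1({\rm End}\,F)=0$, the class $c_2(F)$ is a multiple of $c_X$. Your proposed route through ``rigidity of the moduli of subsheaves of $F$'' would push you toward stability-type arguments in the style of O'Grady's Theorem \ref{theogrady}, which is where Picard-rank and coprimality hypotheses genuinely intervene; the cohomological argument above avoids all of that.
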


This result is extended in the following way by O'Grady : In \cite{ogrady}, he
introduces the following increasing filtration of $CH_0(X)$
$$S_0(X)\subset S_1(X)\subset \ldots \subset  S_d(X)\subset\ldots\subset CH_0(X),$$
where $S_d(X)$ is defined as the set of classes of  cycles of the form
$z+z'$, with $z$ effective of degree $d$ and $z'$ a multiple of
$c_X$. It is also convenient to
introduce $S_d^k(X)$ which will be by definition the set of degree $k$ $0$-cycles
on $X$ which lie in $S_d(X)$. Thus by definition
$$S_d^k(X)=\{z\in CH_0(X),\,z=z'+(k-d)c_X\},$$
where $z'$ is effective of degree $d$.

 Consider a torsion free or more generally a pure  sheaf $\mathcal{F}$ on $X$ which is $H$-stable with respect to a polarization
$H$. Let $2d(v_{\mathcal{F}})$ be the dimension of the space of
deformations of $\mathcal{F}$, where $v_\mathcal{F}$ is the Mukai
vector of $\mathcal{F}$ (cf. \cite{huybrechtslehn}). We recall that
$v_\mathcal{F}\in H^*(X,\mathbf{Z})$ is the triple
$$(r,l,s)\in H^0(X,\mathbf{Z})\oplus H^2(X,\mathbf{Z})\oplus H^4(X,\mathbf{Z}),$$ with $r={\rm rank}\,\mathcal{F}$, $l=c_1^{top}({\rm det}\,\mathcal{F})$
 and $s\in H^4(X,\mathbf{Z})$ is defined
as
\begin{eqnarray}v_\mathcal{F}=ch(\mathcal{F})\sqrt{td(X)}.\label{vraimukai}
\end{eqnarray} In particular
we get by the Riemann-Roch formula that
$$\sum_i(-1)^i{\rm dim}\,Ext^i(\mathcal{F},\mathcal{F})=<v_\mathcal{F},v_\mathcal{F}^*>=2rs-{l^2}=2-2d(v_\mathcal{F}),$$
where $<\,,\,>$ is the intersection pairing on $H^*(X,\mathbf{Z})$,
and $v^*=(r,-l,s)$ is the Mukai vector of $\mathcal{F}^*$ (if $\mathcal{F}$ is locally free).

In particular $d(v_\mathcal{F})=0$ if $\mathcal{F}$ satisfies
${\rm End}\,\mathcal{F}=\mathbb{C}$ and ${\rm Ext}^1(\mathcal{F},\mathcal{F})=0$, so that $\mathcal{F}$ is spherical as in Huybrechts' theorem.
Noticing that $S_0(X)=\mathbf{Z}c_X$, one can then rephrase Huybrechts' statement by saying
that if $\mathcal{F}$ satisfies ${\rm End}\,(\mathcal{F})=\mathbb{C},\,d(v_\mathcal{F})=0$, then
$c_2(\mathcal{F})\in S_0(X)$, assuming the Picard number of $X$ is at least $2$.

O'Grady then extends Huybrechts' results as follows:
\begin{theorem} \label{theogrady} (O'Grady \cite{ogrady}) Assuming $\mathcal{F}$ is $H$-stable, one has $c_2(\mathcal{F})\in S_{d(v_\mathcal{F})}(X)$, $v_\mathcal{F}=(r,l,s)$,
if furthermore one of the following conditions holds:

\begin{enumerate}

\item
 $l=H$, $l$ is primitive  and  $s\geq 0$.

\item
The Picard number of $X$ is at least $2$, $r$ is  coprime to the divisibility of $l$ and $H$ is $v$-generic.

\item

$r\leq 2$ and moreover $H$ is $v$-generic if $r=2$.
\end{enumerate}
\end{theorem}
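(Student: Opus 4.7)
The plan is to derive this theorem from the new characterisation of O'Grady's filtration that is the principal new result of the paper. That characterisation should read: a degree-$k$ cycle $z\in CH_0(X)$ belongs to $S_d^k(X)$ if and only if the locus
\[
O_z := \bigl\{\zeta\in X^{(k)} : [\zeta]=z \text{ in } CH_0(X)\bigr\}\subset X^{(k)}
\]
of effective representatives of $z$ has dimension at least $k-d$. The ``only if'' direction is elementary: writing $z=z'+(k-d)c_X$ with $z'$ effective of degree $d$, we vary the $k-d$ extra points independently along the family of rational curves supplied by the Beauville--Voisin construction. The reverse direction is the deep rigidity statement, the natural generalisation of the unicity assertion in the abstract.

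Granting the characterisation, the task for Theorem \ref{theogrady} is to produce, for an $H$-stable $\mathcal{F}$ with $v_\mathcal{F}=(r,l,s)$, a subvariety of $X^{(k)}$ of effective cycles representing $c_2(\mathcal{F})$ of dimension at least
\[
k-d(v) = \bigl(l^2/2+r-s\bigr)-\bigl(1+l^2/2-rs\bigr) = (r-1)(s+1).
\]
The basic construction is the degeneracy locus of $r-1$ sections: if $h^0(\mathcal{F})=\chi(\mathcal{F})=r+s$, the Grassmannian ${\rm Gr}(r-1,r+s)$ has dimension exactly $(r-1)(s+1)$, and to a generic $(r-1)$-dimensional subspace $W\subset H^0(\mathcal{F})$ one associates the locus where the $r-1$ sections in $W$ become linearly dependent, an effective $0$-cycle representing $c_2(\mathcal{F})$. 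When the original $\mathcal{F}$ fails to satisfy the vanishings $H^1(\mathcal{F})=H^2(\mathcal{F})=0$ that guarantee $h^0=r+s$, one first moves $\mathcal{F}$ along a $d(v)$-dimensional Lagrangian subvariety of the holomorphic-symplectic moduli space $M(v)$ on which $c_2$ is constant in $CH_0(X)$, to reach a sheaf where the section-based construction applies. The three hypotheses can each be read as ensuring the existence of such a Lagrangian meeting a sheaf with the requisite vanishings: case (1) via $|H|$-geometry with $s\geq 0$, case (3) via the smallness of $r$ (sections of $\mathcal{F}$ or $\mathcal{F}^\vee\otimes L$ suffice), and case (2) via a wall-crossing argument where $v$-genericity of $H$ and coprimality of $r$ with the divisibility of $l$ preclude strictly semistable obstructions.

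The principal obstacle is the $CH_0$-level statement that $c_2$ is constant along a suitable Lagrangian in $M(v)$. Mukai's cohomological theorem identifies $T_\mathcal{F} M(v)$ with ${\rm Ext}^1(\mathcal{F},\mathcal{F})$ and pairs infinitesimal deformations with variations of Chern classes via the Mukai form; upgrading the cohomological invariance of $c_2$ along Lagrangians to an invariance in $CH_0(X)$ requires additional input, presumably the Beauville--Voisin decomposition of $CH_0(X)$ combined with the orbit-dimension characterisation of $S_d^k$ that underlies the whole method. A secondary difficulty, specific to case (2), is maintaining stability across the wall-crossings needed to reach a sheaf with $h^0=r+s$; this is precisely what the $v$-genericity of $H$ is designed to control.
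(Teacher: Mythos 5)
The paper does not actually prove this statement: Theorem \ref{theogrady} is quoted from O'Grady \cite{ogrady} as background, and the paper's own contribution (Theorem \ref{ogrady}) is the analogous statement for \emph{simple vector bundles}, with none of the hypotheses (1)--(3) but restricted to the locally free case. Your strategy --- the orbit-dimension characterization of $S_d^k(X)$ combined with the degeneracy locus of $r-1$ general sections of a twist of $\mathcal{F}$, with the count $\dim {\rm Gr}(r-1,r+s)=(r-1)(s+1)=k-d(v)$ --- is exactly how the paper proves that variant. Since $H$-stability implies simplicity, a completed version of your argument would recover all three cases at once for locally free $\mathcal{F}$; but O'Grady's statement also covers torsion-free and pure sheaves that are not locally free, where the evaluation/degeneracy construction does not apply, so your route cannot give the theorem in the stated generality (the paper explicitly flags this restriction of its method).

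Within the locally free case there are two concrete gaps. First, to produce an $(r-1)(s+1)$-dimensional subvariety of the orbit of $c_2(F)$ in $X^{(k)}$ it is not enough that the Grassmannian has that dimension; one must show the map $W\mapsto Z_W$ is generically injective, else the image could be smaller. The paper proves this (Proposition \ref{propinj}) from the exact sequence $0\to W\otimes\mathcal{O}_X\to F\to \mathcal{I}_{Z_W}\otimes\det F\to 0$: tensoring with $F^*$ and using $H^1(X,F^*)=0$ shows that ${\rm Hom}(F,\mathcal{I}_{Z_W}\otimes\det F)$ is generated by the single surjection coming from ${\rm Hom}(F,F)=\mathbb{C}\cdot{\rm Id}_F$, so $W$ is recovered from $Z_W$ up to scalar. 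This is precisely where simplicity is used, and your sketch omits it. Second, your device for reaching a sheaf with $h^0=\chi=r+s$ --- deforming along a Lagrangian in the moduli space on which $c_2$ is constant in $CH_0(X)$ --- is, as you acknowledge, unproven, and it is essentially circular: constancy of $c_2$ in $CH_0$ along positive-dimensional families of sheaves is of the same depth as the theorem being proved. The correct and elementary reduction is to replace $\mathcal{F}$ by $\mathcal{F}\otimes L$ with $L$ sufficiently ample: this achieves global generation, $h^1=0$ and $H^1(X,(\mathcal{F}\otimes L)^*)=0$ (Serre duality plus Serre vanishing), leaves $d(v)$ unchanged, and changes $c_2$ only by intersections of divisor classes, hence by a multiple of $c_X$ by Beauville--Voisin, so membership in $S_{d(v)}(X)$ is unaffected. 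With that substitution and the injectivity statement, your outline becomes the paper's proof of Theorem \ref{ogrady}, and hence of the locally free case of the statement above; the non-locally-free cases still require O'Grady's original arguments.
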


In fact, O'Grady's result is stronger, as he also shows that $S_{d(v)}^{k}(X),\,k={\rm deg}\,c_2(v)$,
is equal to the set of classes $c_2(\mathcal{G})$ with $\mathcal{G}$ a deformation of
$\mathcal{F}$. O'Grady indeed proves, by a nice argument involving
the rank of the  Mukai holomorphic $2$-form on the moduli space of deformations of
$\mathcal{F}$, the following result:
\begin{proposition} \label{propogrady}(O'Grady \cite[Prop. 1.3]{ogrady}) If there is a
$H$-stable torsion free sheaf  $\mathcal{F}$ with $ v=v(\mathcal{F})$, and the conclusion of Theorem
\ref{theogrady} holds for the deformations of $\mathcal{F}$, then
$$\{c_2(\mathcal{G}),\,\mathcal{G}\in \overline{\mathcal{M}^{st}}(X,H,v) \,\,\}= S_{d(v)}^k(X),\,k={\rm deg}\,c_2(\mathcal{F}).$$
\end{proposition}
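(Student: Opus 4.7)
The inclusion $\{c_2(\mathcal{G}) : \mathcal{G} \in \overline{\mathcal{M}^{st}}(X,H,v)\} \subset S_{d(v)}^k(X)$ is the standing hypothesis that the conclusion of Theorem~\ref{theogrady} holds for deformations of $\mathcal{F}$. My plan for the reverse inclusion is to compare two holomorphic symplectic structures and conclude by a properness argument.

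The moduli space $M := \overline{\mathcal{M}^{st}}(X,H,v)$ has dimension $2d(v)$ and carries Mukai's non-degenerate symplectic form $\sigma_M$, given on $T_\mathcal{G} M = \mathrm{Ext}^1(\mathcal{G},\mathcal{G})$ by the trace pairing $(u,w) \mapsto \mathrm{tr}(u \cup w) \in H^2(X,\mathcal{O}_X)$. On the other side, $S_d^k(X)$ is the set-theoretic image of the Hilbert scheme $X^{[d]}$ under $\alpha : Z \mapsto [Z] + (k-d)c_X$, and $X^{[d]}$ itself carries Beauville's symplectic form $\eta$, also of rank $2d$. Consider the incidence correspondence
$$ \mathcal{I} = \{(\mathcal{G}, Z) \in M \times X^{[d]} : c_2(\mathcal{G}) = \alpha(Z) \text{ in } CH_0(X)\}, $$
which, since rational equivalence on $X$ is defined by countably many algebraic conditions, is a countable union of closed subvarieties of $M \times X^{[d]}$. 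By the forward inclusion, the first projection $p_M : \mathcal{I} \to M$ is set-theoretically surjective, so some component $\mathcal{I}_0$ dominates $M$ under $p_M$.

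The crucial step is the identification, up to a nonzero scalar, of the two pulled-back 2-forms $p_M^* \sigma_M$ and $p_2^* \eta$ on $\mathcal{I}_0$. This reflects the fact that, at a point $(\mathcal{G}, Z) \in \mathcal{I}_0$, the tangent directions along $\mathcal{I}_0$ simultaneously deform $\mathcal{G}$ and $Z$ while keeping $c_2(\mathcal{G}) = \alpha(Z)$ at first order; both the Mukai trace-pairing and the Beauville form then encode the same underlying infinitesimal variation of $c_2 \in CH_0(X)$, a computation one carries out via the Atiyah class of $\mathcal{G}$ and the trace map. Granting this identification, dominance of $p_M$ and non-degeneracy of $\sigma_M$ force $p_M^*\sigma_M$ to have rank $2d$ at a general point of $\mathcal{I}_0$, hence the same for $p_2^*\eta$. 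Since $\eta$ has maximal rank $2d$ on all of $TX^{[d]}$, the differential $dp_2$ must be surjective at such points, so $p_2 : \mathcal{I}_0 \to X^{[d]}$ is dominant. As $\mathcal{I}_0$ is closed in the proper variety $M \times X^{[d]}$, the map $p_2$ is proper; proper plus dominant gives surjective, so $p_2(\mathcal{I}_0) = X^{[d]}$ and therefore $c_2(M) \supset \alpha(X^{[d]}) = S_d^k(X)$.

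The main obstacle is the scalar identification of the two symplectic forms on $\mathcal{I}_0$; once this comparison is in hand, the rank calculation and the properness step are routine.
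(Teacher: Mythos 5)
Your strategy is essentially the one the paper uses for the closely analogous Corollary \ref{corogrady} (the paper does not reprove Proposition \ref{propogrady} itself, which is quoted from O'Grady): the forward inclusion is the standing hypothesis; the incidence correspondence between the moduli space and $X^{[d]}$ (resp.\ $X^{(d)}$ in the paper) is a countable union of closed algebraic subsets; a Baire category argument produces a component $\mathcal{I}_0$ dominating the moduli side; Mumford's theorem identifies the pullbacks to $\mathcal{I}_0$ of the two holomorphic $2$-forms, because the difference $c_2(\mathcal{G})-cl(Z)$ is the constant class $(k-d)c_X$; and the rank count (generic rank of $p_M^*\sigma_M$ equals $2d$ by dominance, hence the same for $p_2^*\eta$, hence $dp_2$ is generically surjective) gives dominance of $p_2$. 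You correctly isolate the identification of the Mukai trace pairing with the form induced by the $c_2$-correspondence as the crucial input; the paper treats this at the same level of detail, as it is Mukai's/O'Grady's computation. Where you genuinely diverge is the endgame: the paper, working over a quasi-projective deformation space, only concludes that the image of the second projection contains a Zariski open set $U$, and closes the gap with Lemma \ref{lemmafinal} (Maclean's density of rational equivalence orbits), which gives $cl(U)=cl(X^{(d)})$ in $CH_0(X)$; you instead invoke properness of $p_2:\mathcal{I}_0\rightarrow X^{[d]}$ to upgrade dominance to surjectivity, which dispenses with the density lemma.

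The properness shortcut needs one justification you have not supplied. For $\mathcal{I}_0$ to be closed in the compact product $\overline{\mathcal{M}^{st}}(X,H,v)\times X^{[d]}$, the condition $c_2(\mathcal{G})=\alpha(Z)$ must be algebraically defined over the whole smooth completion, i.e.\ one needs a cycle $\Gamma\in CH^2(\overline{\mathcal{M}^{st}}\times X)$ extending $c_2$ of the (quasi-)universal sheaf: boundary points of the completion are not sheaves, so $c_2(\mathcal{G})$ has no a priori meaning there. This is exactly what the statement of the proposition quietly presupposes and what O'Grady arranges; once such a $\Gamma$ is fixed, the set $\{(m,Z):\Gamma_*(m)=\alpha(Z)\}$ is indeed a countable union of closed subsets of the compact product and your argument goes through. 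If you prefer not to extend $c_2$ over the boundary, you must fall back on the paper's route, dominance plus Lemma \ref{lemmafinal}. Either way the argument is sound; make the choice explicit.
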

In this statement, $\overline{\mathcal{M}^{st}}(X,H,v)$ is any smooth completion of the
moduli space of $H$-stable sheaves with Mukai vector $v$.

Our results in this paper are of two kinds: First of all we provide another description of $S_d^k(X)$ for any $d\geq 0$, $k\geq d$.
In order to state this result, let us introduce the following
notation: Given an integer $k\geq 0$, and a cycle $z\in CH_0(X)$ of degree $k$,
the subset $O_z$ of $X^{(k)}$ consisting
 of effective cycles $z'\in X^{(k)}$ which are rationally equivalent to
$z$ is a countable union of closed algebraic subsets of $X^{(k)}$
(see \cite[Lemma 10.7]{voisinbook}). This is the ``effective orbit''
of $z$ under rational equivalence, and the analogue of $|D|$ for a
divisor $D\in CH^1(W)$ on any variety $W$. We define ${\rm
dim}\,O_z$ as the supremum of the dimensions of the components of
$O_z$. This is the analogue of $r(D)={\rm dim}\,|D|$ for a divisor
$D\in CH^1(W)$ on any variety $W$. We will prove the following:

\begin{theorem}\label{char} Let $X$ be a projective
$K3$ surface. Let $k\geq d\geq 0$. We have the following characterization of $S_d^k(X)$:
$${\rm if}\,\,k>d,\,\,S_d^k(X)=\{z\in CH_0(X),\,O_z\,\,{\rm nonempty},\,\,{\rm dim}\,O_z\geq k-d \}.$$

\end{theorem}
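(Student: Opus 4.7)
The plan is to prove the two inclusions separately. The direction $S_d^k(X) \subset \{z : O_z \text{ nonempty and } \dim O_z \geq k - d\}$ is the easy one: given $z = z' + (k-d) c_X$ with $z'$ effective of degree $d$, I pick a rational curve $R \subset X$ (which exists on any projective $K3$ by Bogomolov-Mumford) and use the map $R^{(k-d)} \to X^{(k)}$, $\xi \mapsto z' + \xi$. Since $[p] = c_X$ for every $p \in R$ by Beauville-Voisin, the $(k-d)$-dimensional image lies entirely in $O_z$.

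For the reverse inclusion I proceed by induction on $r = k - d \geq 1$, splitting off a single $c_X$ at each step. Concretely, given an irreducible $T \subset O_z$ with $\dim T = r$, the aim is to produce a class $z_0 \in CH_0(X)$ of degree $k - 1$ such that $z \sim z_0 + c_X$, $z_0$ is effective, and $\dim O_{z_0} \geq r - 1$. When $r = 1$ this only requires $z_0$ effective (the base case, where ``$\dim O_{z_0} \geq 0$'' just means $O_{z_0}$ nonempty); for $r \geq 2$ the induction hypothesis gives $z_0 \in S_{k-r}^{k-1}(X)$, and hence $z \in S_{k-r}^k(X)$.

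To extract $z_0$, I take a birational smooth projective cover $B \to T$ of dimension $r$ and pull back the universal $0$-cycle to $\mathcal{Z} \subset B \times X$, finite of degree $k$ over $B$, with irreducible components $\mathcal{Z}_j$ of dimension $r$ projecting to closed irreducible subvarieties $X_j \subset X$ of dimensions $r_j \in \{0,1,2\}$. Since $T$ is not a single point, some $r_{j_0} \geq 1$. If $r_{j_0} = 2$, I pick any rational curve $R \subset X$; the preimage $\Gamma := \mathcal{Z}_{j_0} \cap (B \times R)$ has dimension $r - 1$, and the substitution $(b, x) \mapsto t_b - x$ sends $\Gamma$ into $O_{z - c_X}$ (using $[x] = c_X$). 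If $r_{j_0} = 1$, I choose $R$ meeting $X_{j_0}$ at a point $q$ (existence by Bogomolov-Mumford applied to $|mH|$ for $m \gg 0$, combined with positivity of intersection with an ample class), and use that $q \in \mathcal{Z}_{j_0}|_b$ for a $(r-1)$-dimensional family of $b \in B$, with $b \mapsto t_b - q$ producing the required family in $O_{z - c_X}$.

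The main technical point is to verify that the resulting family in $O_{z - c_X}$ has the full dimension $r - 1$. In the $r_{j_0} = 1$ case this is immediate, because the fiber of $b \mapsto t_b - q$ over $y$ is the preimage in $B$ of the cycle $y + q \in X^{(k)}$, which is finite by birationality of $B \to T$. The $r_{j_0} = 2$ case is more delicate because the point $x \in R$ varies: if the map $(b, x) \mapsto t_b - x$ is generically finite the dimension bound is immediate, but otherwise a positive-dimensional generic fiber forces $T$ to contain the line $y + R$ through generic $y$ in the image, i.e., $T$ is ``$R$-ruled''. I would argue that either a sufficiently general choice of $R$ avoids this ruling phenomenon, or, if $T$ is genuinely $R$-ruled, the base $Y' \subset X^{(k-1)}$ of the ruling already has dimension $r - 1$ and lies in $O_{z - c_X}$ (each $y \in Y'$ satisfies $[y] = z - c_X$ by construction), so the required bound still holds through $Y'$. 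This dichotomy is the one subtle step, after which the induction closes.
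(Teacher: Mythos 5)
Your easy inclusion and your overall strategy (induct on $r=k-d$, peel off one point rationally equivalent to $c_X$ at each step) match the paper's proof of its Theorem \ref{autreforme}, and your $r_{j_0}=1$ branch is sound: peeling off a fixed point $q$ of $X_{j_0}\cap R$ gives a map $b\mapsto t_b-q$ with finite fibers (take $B\to T$ finite, e.g.\ the normalization), hence an $(r-1)$-dimensional family in $O_{z-c_X}$, and this even lets you bypass the paper's separate intersection-theoretic argument for the case where no component dominates $X$. The gap is exactly where you flagged it, in the $r_{j_0}=2$ branch, and neither horn of your dichotomy closes it. First, the dimension count in the ``ruled'' horn is wrong: if the generic fiber of $(b,x)\mapsto t_b-x$ on $\Gamma$ has dimension $e\geq 1$, then the base $Y'=\psi(\Gamma)$ of the ruling has dimension $r-1-e\leq r-2$, not $r-1$. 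The curves $y+R$, $y\in Y'$, do not sweep out $T$; they can only sweep out the hypersurface $T_R=\{t\in T:\ \mathrm{supp}(t)\cap R\neq\emptyset\ \text{in the $j_0$-component}\}$, which has dimension $r-1$, so $\dim Y'\geq r-2$ is all you get, and an $(r-2)$-dimensional family in $O_{z-c_X}$ is not enough to run the induction. Second, the ``choose $R$ generally'' horn is not available: you cannot vary $R$ in a positive-dimensional family of rational curves (their infinitude on an arbitrary projective $K3$ was not known, and a priori $T$ could be ruled with respect to every curve you can produce), so you must confront the degenerate situation head-on.

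This degenerate situation is precisely where the paper needs two additional inputs that your argument lacks. It first proves (Lemma \ref{corodense}) that \emph{constant-cycle curves} (curves all of whose points represent $c_X$) are Zariski dense in $X$ --- not via rational curves, but via torsion points on the singular elliptic pencil of Mori--Mukai and Roitman's theorem that $CH_0(X)$ is torsion free --- together with the fact (Lemma \ref{lemma}) that any curve with all points rationally equivalent consists of points representing $c_X$. If the peeling map degenerates for every curve $C$ in this dense family, then the projection forgetting the first point has positive-dimensional fibers everywhere on $Z'$, each such fiber projects to a constant-cycle curve in $X$, and since the first projection is dominant these curves sweep out $X$; hence every point of $X$ would be rationally equivalent to $c_X$, contradicting Mumford's theorem. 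Without this density statement and the appeal to Mumford, the $r_{j_0}=2$ case of your induction does not close, so the proposal as written has a genuine gap rather than a repairable slip.
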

\begin{remark}{\rm The inclusion
$S_d^k(X)\subset\{z\in CH_0(X),\,\,O_z\,\,{\rm nonempty},\,\,{\rm dim}\,O_z\geq k-d \}$ is easy since
the cycle $(k-d)c_X$ has its orbit of dimension $\geq k-d$, (for example
$C^{(k-d)}\subset X^{(k-d)}$, for  any rational curve $C\subset X$, is contained in the orbit of $(k-d)c_X$). Hence any cycle of the form $z+(k-d)c_X$ with $z$ effective
of degree $d$ has an orbit of dimension $\geq k-d$.
}
\end{remark}
A particular case of the theorem above is the case where $d(v)=0$.
By definition $S_0(X)$ is the subgroup $\mathbf{Z}c_X\subset
CH_0(X)$. We thus have:
\begin{corollary}  \label{corochar} For $k>0$, the cycle $kc_X$ is the unique  $0$-cycle $z$ of  degree
$k$ on $X$ such that ${\rm dim}\,O_z\geq k$.
\end{corollary}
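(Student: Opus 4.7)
The plan is to obtain this corollary as the specialisation of Theorem \ref{char} to the case $d = 0$. First, I would unwind the definition of $S_0(X)$: an element of $S_0(X)$ has the form $z' + m c_X$ with $z'$ effective of degree $0$, forcing $z' = 0$; hence $S_0(X) = \mathbf{Z} c_X$, and in particular $S_0^k(X) = \{k c_X\}$ for every $k \geq 0$.

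Next, applying Theorem \ref{char} with $d = 0$ and $k > 0$ yields
$$S_0^k(X) = \{ z \in CH_0(X) : O_z \neq \emptyset,\ \dim O_z \geq k \}.$$
Combining the two identifications of $S_0^k(X)$, the cycle $k c_X$ is the unique degree-$k$ $0$-cycle $z$ with $\dim O_z \geq k$, which is exactly the statement of the corollary.

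All the content therefore lies in Theorem \ref{char}. The easy inclusion $\dim O_{k c_X} \geq k$ was already observed in the Remark: for any rational curve $C \subset X$, the symmetric power $C^{(k)}$ is a $k$-dimensional family of effective $0$-cycles all rationally equivalent to $k c_X$, since every point of $C$ has class $c_X$ in $CH_0(X)$. The nontrivial direction is the uniqueness, and this is where I expect the main obstacle to lie: proving it independently of Theorem \ref{char} would essentially amount to reproving the main theorem of the paper in the case $d = 0$ --- one would need a rigidity statement saying that no degree-$k$ cycle other than $k c_X$ can move in a $k$-dimensional effective family under rational equivalence. The natural strategy is therefore simply to defer this rigidity to Theorem \ref{char}, which is proved in the body of the paper.
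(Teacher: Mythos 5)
Your proposal is correct and matches the paper's own derivation: the corollary is presented there as the immediate specialisation of Theorem \ref{char} to $d=0$, using the observation that $S_0(X)=\mathbf{Z}c_X$ and hence $S_0^k(X)=\{kc_X\}$. (The paper also sketches a second, more direct proof at the end of Section \ref{sec1} via Mumford's theorem and the Lagrangian intersection Lemma \ref{lag}, but the primary derivation is exactly the one you give.)
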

\begin{remark}{\rm We have in fact ${\rm dim}\,O_z=k,\,z=kc_X$ since
by Mumford's theorem
\cite{mumford},
 any component $L$ of $O_z$ is Lagrangian for the holomorphic
 symplectic form on
 $S^{(k)}_{reg}$, hence of dimension $\leq k$ if $L$ intersects $S^{(k)}_{reg}$.
 If $L$ is contained in the singular locus of $S^{(k)}$, we can consider the
 minimal multiplicity-stratum of $S^{(k)}$  containing $L$, which is determined by
  the multiplicities $n_i$ of the general cycle
  $\sum_in_ix_i,\,x_i$ distinct, parametrized by $L$) and apply the same argument.}
 \end{remark}
 \begin{remark}{\rm We will give in Section \ref{sec1} an alternative proof of
 Corollary \ref{corochar}, using the remark above, and the fact that
 any Lagrangian subvariety of $X^k$ intersects  a product $D_1\times\ldots\times D_k$ of
 ample divisors on $X$.}
 \end{remark}
 Our main application of Theorem \ref{char} is the following result which
 generalizes O'Grady's and Huybrechts' theorems \ref{theogrady}, \ref{theohuybrechts}
  in the case of simple vector bundles
 (instead of semistable torsion free sheaves). We do not need any of the assumptions
 appearing in Theorems \ref{theogrady}, \ref{theohuybrechts}, but our results, unlike those of O'Grady,
 are restricted to the locally free case.
\begin{theorem}\label{ogrady} Let $X$ be a projective
$K3$ surface. Let $F$ be a simple vector bundle on $X$ with Mukai vector
$v=v(F)$. Then
$$c_2(F)\in S_{d(v)}(X).$$
\end{theorem}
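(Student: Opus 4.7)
The plan is to verify the orbit condition of Theorem~\ref{char}: writing $k=\deg c_2(F)$, I will show that the effective orbit $O_{c_2(F)}\subset X^{(k)}$ has dimension at least $k-d(v)$.

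The first step is to reduce to the globally generated case by replacing $F$ with $F\otimes\mathcal{O}_X(nH)$ for an ample divisor $H$ and $n\gg0$.  Since $\mathrm{End}(F\otimes L)\cong\mathrm{End}(F)$ as sheaves, tensoring by a line bundle preserves both simplicity and $d(v)$.  By the Beauville--Voisin relation $D\cdot D'=(D\cdot D')\,c_X$ in $CH_0(X)$, such a twist alters $c_2(F)$ only by a multiple of $c_X$; membership in $S_{d(v)}(X)$ is therefore preserved.  After twisting, I may assume $F$ globally generated with $H^i(F)=0$ for $i>0$, so $h^0(F)=\chi(F)=r+s$ is arbitrarily large.

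For each $(r-1)$-dimensional subspace $\Lambda\subset H^0(F)$, the Thom--Porteous degeneracy locus of the evaluation map $\Lambda\otimes\mathcal{O}_X\to F$ is an effective $0$-cycle $D_\Lambda$ of class $c_2(F)$.  This defines a rational map $\phi\colon G(r-1,H^0(F))\dashrightarrow X^{(k)}$ whose image lies in $O_{c_2(F)}$.  A direct Mukai-vector computation gives the clean identity
\[
\dim G(r-1,H^0(F)) = (r-1)(s+1) = k - d(v),
\]
matching the target orbit dimension exactly.  Since $\phi$ need not be generically finite for a fixed $F$---its fibers are controlled by Serre's construction and can be positive-dimensional---I would enlarge the source by allowing $F$ to deform as well.

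Consider the moduli space $M$ of simple sheaves with Mukai vector $v$.  By Mukai's argument ($\mathrm{Ext}^2(F,F)_0=0$ via the trace isomorphism for simple sheaves on a K3), $M$ is smooth of dimension $2d(v)$ at $[F]$.  The key ingredient is that the locus $M_F:=\{F'\in M:c_2(F')=c_2(F)\text{ in }CH_0(X)\}$ is coisotropic for Mukai's holomorphic symplectic form $\sigma_M$, by the infinitesimal Bloch-type analysis of $F'\mapsto c_2(F')$ that underlies Proposition~\ref{propogrady}; hence $\dim M_F\geq d(v)$.  The universal Grassmannian $\mathcal{G}_F$ over $M_F$ has dimension at least $d(v)+(k-d(v))=k$, and the natural evaluation $(F',\Lambda)\mapsto D_{F',\Lambda}$ sends $\mathcal{G}_F$ into $O_{c_2(F)}$.

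The main obstacle will be the generic-fiber estimate for the map $\mathcal{G}_F\to X^{(k)}$: I need the generic fiber to have dimension at most $d(v)$, so that the image reaches the required $k-d(v)$.  A fiber over $D$ parametrizes Serre-type extensions
\[
0\to\mathcal{O}_X^{r-1}\to F'\to I_D\otimes\det F'\to 0
\]
with $F'$ simple and $c_2(F')=c_2(F)$ in $CH_0(X)$; the rational-equivalence constraint---analogous to the Lagrangian mechanism driving Proposition~\ref{propogrady}---is what cuts the fiber down from the full extension space to dimension $d(v)$.  Once this estimate is in hand, Theorem~\ref{char} yields $c_2(F)\in S_{d(v)}^k(X)\subset S_{d(v)}(X)$.
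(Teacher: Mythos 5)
Your first half is exactly the paper's construction: twist $F$ to make it globally generated with vanishing higher cohomology, take the Grassmannian $G=Grass(r-1,H^0(F))$ of $(r-1)$-dimensional subspaces, map $W$ to the degeneracy locus $Z_W$ of the evaluation $W\otimes\mathcal{O}_X\to F$, and compute $\dim G=(r-1)(s+1)=k-d(v)$. But at the decisive moment you assert that $\phi\colon G\dashrightarrow X^{(k)}$ ``need not be generically finite'' and abandon this map for a detour through the moduli space of simple sheaves. That is precisely the step the paper settles in the opposite direction, and it is the heart of the proof: the fiber of $\phi$ over $Z=Z_W$ injects into $\mathbf{P}(\mathrm{Hom}(F,\mathcal{I}_Z\otimes\det F))$ via the Serre-type sequence $0\to W'\otimes\mathcal{O}_X\to F\to\mathcal{I}_Z\otimes\det F\to 0$ (one recovers $W'$ as $H^0(\ker t_{W'})$), and tensoring the sequence for $W$ by $F^*$ gives a surjection $\mathrm{Hom}(F,F)\twoheadrightarrow\mathrm{Hom}(F,\mathcal{I}_Z\otimes\det F)$ as soon as $H^1(X,F^*\otimes W)=0$. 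On a $K3$ surface $H^1(F^*)\cong H^1(F)^*$ by Serre duality, so the vanishing you already arranged by twisting gives exactly this; simplicity then makes $\mathrm{Hom}(F,\mathcal{I}_Z\otimes\det F)$ one-dimensional and the fiber a single point. You had every ingredient in hand (simplicity, the vanishing, the Serre construction) and did not draw the conclusion. Once $\phi$ is generically injective, its image is a $(k-d(v))$-dimensional constant-cycle subvariety of $X^{(k)}$ and Theorem~\ref{char} finishes the proof; no deformation of $F$ is needed.

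The replacement argument you propose does not close the gap. First, the claim that $M_F=\{F':c_2(F')=c_2(F)\text{ in }CH_0(X)\}$ is coisotropic with $\dim M_F\geq d(v)$ is unjustified and points the wrong way: the Mumford-type analysis you invoke shows that any algebraic family of sheaves with constant $c_2$ in $CH_0(X)$ is \emph{isotropic} for the Mukai form (this is how Proposition~\ref{propogrady} is proved), giving the upper bound $\dim M_F\leq d(v)$, not a lower bound; the lower bound is essentially equivalent to what you are trying to prove. Second, even granting it, you explicitly leave open the generic-fiber estimate for $\mathcal{G}_F\to X^{(k)}$, which is again the whole difficulty. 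So the proposal as written establishes only the easy dimension count and defers the actual content to two unproved (and in one case misdirected) assertions.
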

A particular case of this statement is the case where $d=0$ : The corollary below proves Huybrechts' Theorem \ref{theohuybrechts} without any assumption on the Picard group of the $K3$ surface or on the determinant of $F$. It is conjectured in \cite{huybrechts}.
\begin{corollary} Let $F$ be a simple rigid vector bundle on a $K3$ surface. Then
the element $c_2(F)$ of $ CH_0(X)$ is a multiple of $c_X$.
\end{corollary}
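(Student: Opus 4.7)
The plan is to deduce this corollary as a one-line consequence of Theorem \ref{ogrady}. The hypotheses are that $F$ is simple, i.e.\ $\mathrm{End}(F) = \mathbb{C}$, and rigid, i.e.\ $H^1(\mathrm{End}\,F) = \mathrm{Ext}^1(F,F) = 0$; together these are precisely the conditions making $F$ spherical in Huybrechts' sense, as recalled just before Theorem \ref{theogrady}.

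First I would verify that $d(v_F) = 0$ under these assumptions. Serre duality on the $K3$ surface $X$ gives $\mathrm{Ext}^2(F,F) \cong \mathrm{Hom}(F,F)^{*} = \mathbb{C}$, so
$$\chi(F,F) = \sum_i (-1)^i \dim \mathrm{Ext}^i(F,F) = 1 - 0 + 1 = 2.$$
Comparing with the Riemann--Roch identity $\chi(F,F) = 2 - 2d(v_F)$ stated after (\ref{vraimukai}) yields $d(v_F) = 0$. This is exactly the observation already recorded in the paragraph preceding Theorem \ref{theogrady}.

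Theorem \ref{ogrady} then gives $c_2(F) \in S_{d(v_F)}(X) = S_0(X)$. By the very definition of O'Grady's filtration, $S_0(X)$ consists of cycles of the form $z + m c_X$ with $z$ effective of degree $0$, hence $z = 0$; therefore $S_0(X) = \mathbf{Z} c_X$, and we conclude that $c_2(F)$ is an integer multiple of $c_X$.

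There is no genuine obstacle in the corollary itself: the entire content sits in Theorem \ref{ogrady}, which in turn rests on the new characterization of $S_d^k(X)$ via the dimension of the effective orbit $O_z$ (Theorem \ref{char}). Once Theorem \ref{ogrady} is in hand, the deduction above is a routine unwinding of definitions, and in particular it removes the Picard-rank and determinant hypotheses of Theorem \ref{theohuybrechts} as conjectured in \cite{huybrechts}.
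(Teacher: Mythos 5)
Your proposal is correct and matches the paper's own (largely implicit) deduction: the paper likewise observes that simplicity and rigidity force $\chi(F,F)=2$, hence $d(v_F)=0$ via $\chi(F,F)=2-2d(v_F)$, and then invokes Theorem \ref{ogrady} together with $S_0(X)=\mathbf{Z}c_X$. Nothing further is needed.
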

We also deduce  the following corollary, in the same spirit (and with essentially the same proof)
as Proposition \ref{propogrady}:
\begin{corollary}\label{corogrady}
Let $v\in H^*(X,\mathbf{Z})$ be a Mukai vector, with $k=c_2(v)$. Assume there exists a simple vector bundle
$F$
on $X$ with Mukai vector $v$. Then
$$S_d^k(X)= \{c_2({G}), \,\,G\,\,{\rm a\,\, simple\,\, vector\,\, bundle\,\,on}\,\,X,\,\,v_G=v\},$$
where $k=c_2(v):=c_2(F)$.

\end{corollary}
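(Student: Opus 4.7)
The easy inclusion, $c_2(G)\in S_d^k(X)$ for every simple vector bundle $G$ with $v_G=v$, is immediate from Theorem~\ref{ogrady}: any such $G$ satisfies $c_2(G)\in S_{d(v)}(X)$, and $\deg c_2(G)=k$ forces membership in $S_d^k(X)$.

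For the reverse inclusion, the plan is to run, with only cosmetic changes, the argument O'Grady uses in his proof of Proposition~\ref{propogrady}, replacing his moduli of $H$-stable torsion-free sheaves by the moduli space $\mathcal{M}$ of simple vector bundles with Mukai vector $v$, and our Theorem~\ref{ogrady} in place of his Theorem~\ref{theogrady}. Near the given $F$, $\mathcal{M}$ is smooth of dimension $2d=2d(v)$ and carries Mukai's holomorphic symplectic form $\sigma$; neither ingredient requires any stability assumption, only that $\text{End}(G)=\mathbb{C}$, so the adaptation from stable sheaves to simple vector bundles goes through. Theorem~\ref{ogrady} then guarantees that the map $\psi:G\mapsto c_2(G)$ sends $\mathcal{M}$ into $S_d^k(X)\subset CH_0(X)$.

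The heart of the argument is a rank analysis of $\sigma$. Using Mumford's theorem applied to the universal bundle on $\mathcal{M}\times X$, one checks that the restriction of $\sigma$ to any subvariety $T\subset\mathcal{M}$ along which $c_2$ is constant modulo rational equivalence vanishes identically; since $\sigma$ is nondegenerate of rank $2d$, every fiber of $\psi$ is $\sigma$-isotropic, and therefore of dimension at most $d$. Combining this dimension bound on fibers with the characterization of $S_d^k(X)$ provided by Theorem~\ref{char}---degree-$k$ classes $z$ with $\dim O_z\ge k-d$---forces $\psi(\mathcal{M})=S_d^k(X)$.

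The main point requiring care is the last step: turning the ``isotropic fibers of $\psi$'' bound into set-theoretic surjectivity of $\psi$ onto $S_d^k(X)$, and transcribing O'Grady's original argument (which was written over a smooth projective completion of the moduli of stable sheaves) into the present setting of simple vector bundles (whose moduli is in general only a non-separated algebraic space). Both are handled by working locally at smooth points of $\mathcal{M}$ near $F$ and tracking the image of the component of $\mathcal{M}$ containing $F$; the matching of fiber-dimension on the source with orbit-dimension on the target, via Theorem~\ref{char}, is precisely what makes the surjectivity work.
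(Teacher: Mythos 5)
Your first inclusion is fine, and you have correctly identified the main ingredients of the converse (a deformation family of $F$, which is smooth of dimension $2d$ by simplicity alone, the Mukai form, and Mumford's theorem). But the step you call the heart of the argument does not give the conclusion. Knowing that every fiber of $\psi:G\mapsto c_2(G)$ is isotropic, hence of dimension at most $d$, only tells you that the image of $\psi$ is (loosely speaking) at least $d$-dimensional. That is too weak: $S_d^k(X)$ is the image of the \emph{$2d$-dimensional} variety $X^{(d)}$ under $z\mapsto cl(z)+(k-d)c_X$, and for very general $z$ this map has countable fibers (a very general point of $X^{(d)}$ lies on no positive-dimensional constant-cycle subvariety), so $S_d^k(X)$ is a ``$2d$-dimensional'' family of classes. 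A $d$-dimensional piece of it cannot be all of it, and Theorem \ref{char} does not bridge this gap --- indeed Theorem \ref{char} is not used at all in the paper's proof of this corollary.

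What the paper actually does is dual to your fiber bound. It forms the incidence set $R\subset Y\times X^{(d)}$ of pairs $(y,z)$ with $c_2(\mathcal{F}_y)=cl(z)+(k-d)c_X$ (a countable union of closed algebraic subsets, by Theorem \ref{ogrady} with surjective first projection), extracts by a Baire category argument a component $R_0$ dominating $Y$, and then uses Mumford's theorem to identify the pullbacks to $R_0$ of the Mukai form on $Y$ and of the symplectic form on $X^{(d)}$. Since $R_0\to Y$ is dominant and the Mukai form has rank $2d$, the pullback of the form on $X^{(d)}$ also has rank $2d$, which forces the \emph{second} projection $R_0\to X^{(d)}$ to be dominant. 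Even then one is not done: a dominant map only hits a Zariski open $U\subset X^{(d)}$, and one needs Lemma \ref{lemmafinal} (Maclean's density of rational equivalence orbits) to conclude $cl(U)=cl(X^{(d)})$ and hence that every class of $S_d^k(X)$ is attained. Your proposal omits both the rank argument on the correspondence and this final density step, and these are precisely the points where the proof lives.
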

These results answer for
 simple vector bundles on $K3$ surfaces questions asked  by  O'Grady  (see \cite[section 5]{ogrady}) for simple sheaves.

The paper is organized as follows: in Section \ref{sec1}, we prove Theorem \ref{char}. We also show a variant concerning family of subschemes (rather than $0$-cycles) of given length in a constant rational equivalence class.
In section \ref{sec2}, Theorem \ref{ogrady} and Corollary \ref{corogrady}
are  proved.

\vspace{0.5cm}

{\bf Thanks.} I thank Daniel Huybrechts and Kieran
O'Grady for useful and interesting comments on a preliminary version of this paper.

\vspace{0.5cm}

{\it  J'ai grand plaisir \`{a} d\'{e}dier cet article \`{a} Rob Lazarsfeld, avec
toute mon estime et ma sympathie.
Son merveilleux
article  \cite{laz} red\'{e}montrant un grand th\'{e}or\`{e}me classique sur les
s\'{e}ries lin\'{e}aires sur les courbes g\'{e}n\'{e}riques
a aussi jou\'{e} un r\^{o}le d\'{e}cisif dans l'\'{e}tude des fibr\'{e}s vectoriels et des $0$-cycles
sur les surfaces $K3$.}
\section{An alternative description of O'Grady's filtration\label{sec1}}
This section is devoted to the proof of Theorem \ref{char} which we state in the following form:
\begin{theorem}\label{autreforme}
Let $k\geq d$ and let $Z\subset X^{(k)}$ be a Zariski closed
irreducible algebraic subset of dimension $k-d$. Assume that all
cycles of $X$ parametrized by $Z$ are rationally equivalent in $X$.
Then the class of these cycles belongs to $S_d^k(X)$.
\end{theorem}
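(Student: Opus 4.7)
The plan is to lift $Z$ to an irreducible component $\tilde Z \subset X^k$ of its preimage under the quotient $X^k \to X^{(k)}$, so that $\dim \tilde Z = k-d$. Since rational equivalence is constant along $Z$, Mumford's theorem forces $\tilde Z$ to be isotropic for the holomorphic symplectic form $\sigma = \sum_{i=1}^{k} p_i^*\omega_X$ on $X^k$ (where $p_i : X^k \to X$ is the $i$-th projection and $\omega_X$ spans $H^{2,0}(X)$). The aim is then to exhibit a point $(r_1,\ldots,r_{k-d},x_1,\ldots,x_d)\in\tilde Z$ with each $r_i$ lying on some rational curve of $X$: granted such a point, the Beauville--Voisin theorem gives $[r_i]=c_X$, so the common class of the cycles parametrized by $Z$ equals
\[
(k-d)\,c_X + [x_1+\cdots+x_d] \in S_d^k(X).
\]

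\textbf{Construction.} To produce this point I would intersect $\tilde Z$ with a subvariety of the form
\[
R_1\times\cdots\times R_{k-d}\times X^d \;\subset\; X^k,
\]
where each $R_i$ is a (possibly nodal) rational curve lying in a sufficiently ample linear system $|L_i|$ on $X$; such rational curves exist on any projective $K3$ by Bogomolov--Mumford. The two subvarieties have complementary dimensions, and nonemptiness of the algebraic intersection is governed by the cohomological number $\int_{\tilde Z}\prod_{i=1}^{k-d} p_i^* c_1(L_i)$. For $d=0$ this reduces to the intersection of the Lagrangian $\tilde Z$ with $D_1\times\cdots\times D_k$ sketched in the remark following Corollary~\ref{corochar}.

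\textbf{Induction and main obstacle.} The argument would run by induction on $k$; the base $k=d$ is immediate, as $Z$ is then a single point parametrizing an effective cycle of degree $d$. The chief technical obstacle is to show that the above intersection number is strictly positive, which can fail if some projection $p_i|_{\tilde Z}:\tilde Z\to X$ has image of too small dimension. I would treat the degenerate cases separately using the isotropy of $\tilde Z$. If some $p_i|_{\tilde Z}$ has $0$-dimensional image $\{x_i\}$, then $x_i$ appears in the support of every cycle of $Z$, and removing it produces a family in $X^{(k-1)}$ of dimension $k-d$ with constant class; the inductive hypothesis at $(k-1,d-1)$ puts its class in $S_{d-1}^{k-1}(X)$, and reassembling yields the required class in $S_d^k(X)$. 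If some $p_i|_{\tilde Z}$ has $1$-dimensional image $C_i$, then the constancy of the rational equivalence class together with variation along fibers of $p_i$ forces all points of $C_i$ to represent a single class in $CH_0(X)$; since every curve on a projective $K3$ meets a rational curve (via a rational member of an ample linear system), that single class must be $c_X$, and one extracts $c_X$ to drop to level $(k-1,d)$. In the remaining regime every projection $p_i|_{\tilde Z}$ is dominant to $X$, which is precisely when the intersection number is strictly positive by ampleness of the $L_i$, and the direct construction succeeds.
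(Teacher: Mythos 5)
Your overall framework (lift to $X^k$, use Mumford's theorem, intersect with products of rational curves, induct on $k$) is in the spirit of the paper, but your trichotomy according to the dimension of each \emph{individual} image $p_i(\tilde Z)$ contains two genuine gaps. First, in the case where $p_i(\tilde Z)$ is a curve $C_i$, your claim that constancy of the class on $\tilde Z$ ``forces all points of $C_i$ to represent a single class in $CH_0(X)$'' is false. For two points of $\tilde Z$ one only gets $[x_i]-[x_i']=[\mathrm{rest}']-[\mathrm{rest}]$, and the remaining coordinates may vary with $x_i$. Concretely, take $k=2$, $d=1$, $E\subset X$ a general fibre of an elliptic pencil (Mori--Mukai), and $\tilde Z=\{(x,\sigma(x)):x\in E\}$ with $\sigma(x)=z_0-x$ in the group law of $E$: all cycles $x+\sigma(x)$ are rationally equivalent, both projections have image $E$, yet the points of $E$ are not all rationally equivalent in $X$ (otherwise every elliptic fibre would be a constant-cycle curve, contradicting Mumford). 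So your reduction to level $(k-1,d)$ in this case does not go through. The paper never makes this claim; when no projection is dominant it instead works inside $C_1\times\cdots\times C_k$, where the restriction of $\bigotimes_i pr_i^*\mathcal{O}_X(C)$ (with $C$ an ample constant-cycle curve) is ample, so its $(k-d)$-th self-intersection meets the $(k-d)$-dimensional $Z'$; expanding, \emph{some} term $pr_{i_1}^*C\cdots pr_{i_{k-d}}^*C$ meets $Z'$, producing a point of $Z$ with $k-d$ coordinates equal to $c_X$. Note that one must sum over all choices of indices $i_1<\cdots<i_{k-d}$: your fixed slice $R_1\times\cdots\times R_{k-d}\times X^d$ may well miss $\tilde Z$ for that particular choice.

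Second, in your ``remaining regime'' the positivity of $\int_{\tilde Z}\prod_{i=1}^{k-d}p_i^*c_1(L_i)$ does not follow from dominance of each $p_i|_{\tilde Z}$. That integral is pulled back from $X^{k-d}$ via $q=(p_1,\ldots,p_{k-d})$, so it vanishes whenever $\dim q(\tilde Z)<k-d$, which can happen even with every $p_i|_{\tilde Z}$ dominant (already $Y=\{(x,x,x)\}\subset X^3$ has all three projections dominant but dimension $2$); and even when $q$ is generically finite one cannot invoke Khovanskii--Teissier, since $p_i^*\omega_i$ is not big on a $(k-d)$-fold for $k-d>2$. The Lagrangian volume-form argument (Lemma~\ref{lag}) works only for $d=0$, where all $k$ factors are used and $\tilde Z$ is maximal isotropic; for $d>0$ isotropy alone gives nothing here. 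The paper sidesteps this entirely: when \emph{some} projection $p$ is dominant it does not use an intersection number at all, but slices $\tilde Z$ by $p^{-1}(C)$ for the Zariski-dense family of constant-cycle curves $C$ (Lemma~\ref{corodense}), applies the inductive hypothesis at level $(k-1,d)$ to the image under the complementary projection when that image has the expected dimension, and derives a contradiction with Mumford's theorem otherwise. You would need to supply an argument of this kind to close both gaps.
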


We will need for the proof the following simple lemma, which already appears in
\cite{voisinGandT}.
\begin{lemma}\label{lemma} Let $X$ be a  projective  $K3$ surface and let $C\subset S$ be a
(possibly singular) curve such that all points of $C$ are rationally equivalent
in $X$. Then any point of $C$ is rationally equivalent to $c_X$.
\end{lemma}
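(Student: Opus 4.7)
The plan is to apply the Beauville--Voisin relation $D\cdot D'=\deg(D\cdot D')\,c_X$ in $CH_0(X)$ directly to the divisor $[C]\subset X$ itself, paired against any ample divisor. Since $X$ is a smooth surface and $C$ is an irreducible curve (if $C$ is reducible, one argues component by component and uses that components intersect in points of $C$), $C$ is a prime divisor on $X$, so this identity is available with $D=[C]$.

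Concretely, I would pick an ample divisor $H$ on $X$ and, after replacing it by a general member of $|nH|$ for $n\gg 0$, arrange that $H$ meets $C$ properly (i.e.\ $H$ does not contain $C$). Set $d:=\deg(C\cdot H)>0$ and let $\alpha\in CH_0(X)$ be the common rational equivalence class of the points of $C$. The intersection product $[C]\cdot[H]\in CH_0(X)$ is then represented by the $0$-cycle associated to $C\cap H$, whose support is a finite set of points of $C$; summing these points with their intersection multiplicities and using that each has class $\alpha$ gives
\begin{equation*}
[C]\cdot[H]=d\,\alpha \quad \text{in } CH_0(X).
\end{equation*}
On the other hand, the Beauville--Voisin identity gives $[C]\cdot[H]=d\,c_X$ in $CH_0(X)$.

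Comparing yields $d(\alpha-c_X)=0$ in $CH_0(X)$, and the class $\alpha-c_X$ has degree $0$. Since the Albanese variety of a $K3$ surface is trivial, Roitman's theorem ensures that $CH_0(X)^{\mathrm{hom}}$ is torsion-free; as $d>0$ we conclude $\alpha=c_X$, which is exactly the claim. The only point requiring care is arranging that the representative of $[C]\cdot[H]$ really is a $0$-cycle supported on $C$ whose points have class $\alpha$, and this is immediate once $H$ is chosen to meet $C$ properly; so there is no real obstacle beyond invoking Beauville--Voisin and Roitman.
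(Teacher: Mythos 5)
Your proof is correct and is essentially the paper's own argument: the paper computes $c_1(L)\cdot C$ in two ways --- as $j_*(c_1(L)|_C)=\deg(c_1(L)|_C)\,c$ using the hypothesis on $C$, and as $\deg(c_1(L)|_C)\,c_X$ using Beauville--Voisin --- and then concludes from the torsion-freeness of $CH_0(X)$ (Roitman), exactly as you do. The only cosmetic difference is that the paper pushes forward the restriction $c_1(L)|_C$ rather than choosing a member of $|nH|$ meeting $C$ properly, which sidesteps the moving step you mention.
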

\begin{proof} Let $L$ be an ample line bundle on $X$.
Then $c_1(L)_{\mid C}$ is a $0$-cycle on $C$ and our assumptions imply
that $j_*(c_1(L)_{\mid C})={\rm deg}\,(c_1(L)_{\mid C})\,c$, for any point $c$ of $C$.

On the other hand,  we have
$$j_*(c_1(L)_{\mid C})=c_1(L)\cdot C\,\,{\rm in}\,\,CH_0(X)$$
and thus, by \cite{beauvoisin}, $j_*(c_1(L)_{\mid C})={\rm deg}\,(c_1(L)_{\mid C})\,c_X$ in $CH_0(X)$.
Hence we have
$${\rm deg}\,(c_1(L)_{\mid C})\,c={\rm deg}\,(c_1(L)_{\mid C})\,c_X\,\,{\rm in}
\,\,CH_0(X).$$
This concludes the proof, since $c$ is arbitrary, ${\rm deg}\,(c_1(L)_{\mid C})\not=0$ and $CH_0(X)$ has no torsion.
\end{proof}
\begin{lemma} \label{corodense} The union of
curves $C$ satisfying the property stated in Lemma
\ref{lemma} is Zariski dense in $X$.

\end{lemma}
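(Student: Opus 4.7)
The plan is to reduce the statement to the Zariski density of rational curves on $X$, and then to invoke the classical theorem of Bogomolov--Mumford. Any (possibly singular) rational curve $C\subset X$ automatically satisfies the hypothesis of Lemma \ref{lemma}: if $\nu:\mathbf{P}^1\to C$ is the normalization, all points of $\mathbf{P}^1$ are rationally equivalent, and proper pushforward preserves rational equivalence, so all points of $C$ are rationally equivalent in $CH_0(X)$ (they are in fact equal to $c_X$ by \cite{beauvoisin}). It therefore suffices to prove that the union of all rational curves on $X$ is Zariski dense.

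To produce enough rational curves, I would apply the theorem of Bogomolov--Mumford: for any projective $K3$ surface $X$ and any ample line bundle $L$, the linear system $|L|$ contains at least one (necessarily singular) rational curve. Fixing one ample class $H$ and letting $n\geq 1$ vary, one obtains, for each $n$, a rational curve $R_n\in|nH|$. The numerical classes $[R_n]=nH\in\mathrm{Pic}(X)$ are pairwise distinct, so the $R_n$ form an infinite collection of pairwise distinct irreducible curves on $X$.

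Let $Z\subset X$ be the Zariski closure of $\bigcup_{n\geq 1}R_n$. If $Z\neq X$, then $\dim Z\leq 1$, so $Z$ is a finite union of irreducible curves; by the pigeonhole principle, infinitely many distinct $R_n$ would then lie inside a single irreducible curve, which is absurd since the only positive-dimensional closed irreducible subvariety of an irreducible curve is the curve itself. Hence $Z=X$, which gives the claimed density.

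The main obstacle in this plan is the appeal to Bogomolov--Mumford: its proof, which proceeds by degenerating $X$ to a Kummer surface and deforming nodal rational curves there, is non-trivial. Granted that existence result, the remaining steps are elementary. An alternative, which avoids Bogomolov--Mumford in the higher Picard rank case, would be to exhibit rational curves directly by intersecting $X$ with suitable families of hyperplane sections of low degree and invoking the genus formula, but the uniform approach through Bogomolov--Mumford covers all projective $K3$ surfaces at once.
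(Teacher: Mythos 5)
There is a genuine gap at the step where you pass from Bogomolov--Mumford to an infinite collection of pairwise distinct rational curves. The Bogomolov--Mumford theorem, in the form that holds for \emph{every} projective $K3$ surface (see \cite{morimukai}), produces in each ample linear system $|nH|$ a divisor \emph{all of whose components are rational curves}; it does not produce an irreducible rational curve in $|nH|$. On a special $X$ the divisor supplied in $|nH|$ may well be of the form $\sum_i a_i R_i$ with the $R_i$ drawn from a fixed finite list of rational curves (for instance $nR$ for a single rational curve $R\in|H|$), so your curves $R_n$ need be neither irreducible nor pairwise distinct, and the pigeonhole argument collapses. The statement you actually need --- that every projective $K3$ surface carries infinitely many rational curves, or at least a Zariski dense union of them --- is precisely what the paper points out was not known in general at the time (it is the theme of \cite{hassettetal}, and was settled in full only much later by Chen--Gounelas--Liedtke); it does not follow from the classical degeneration-to-Kummer argument.

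The paper circumvents this by not restricting to rational curves: Lemma \ref{lemma} only asks for curves all of whose points are rationally equivalent in $X$, and the proof manufactures a dense supply of such curves from a $1$-parameter family of singular elliptic curves $E_t$ (which exists on every projective $K3$ by \cite{morimukai}) together with a single rational curve $C$ meeting the $E_t$. Following \cite{maclean}, the points of $\widetilde{E}_t$ that differ from a point $x_t\in E_t\cap C$ by an $N$-torsion class in $\mathrm{Pic}^0(\widetilde{E}_t)$ are rationally equivalent to $c_X$ in $X$, because $CH_0(X)$ is torsion free (Roitman \cite{roitman2}); as $N$ varies these points sweep out curves $C_N$ whose union is dense in $X$. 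Your reduction of the lemma to the density of some class of constant-cycle curves, and your observation that rational curves are of this type, are both correct; what is missing is the supply of curves, and that is exactly the hard point the paper's argument is designed to avoid.
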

\begin{proof} The $0$-cycle $c_X$ is represented by any point lying on a (singular)
rational curve
$C\subset X$ (see \cite{beauvoisin}), so the result is clear if one knows that there
are infinitely many distinct rational curves contained in $X$. This result
is to our knowledge known only for general $K3$ surfaces but not for all
$K3$ surfaces (see however \cite{hassettetal}
for results in this direction). In any case, we can use the following argument
which already appears in \cite{maclean}:
By \cite{morimukai}, there is a $1$-parameter family of (singular) elliptic curves
$E_t$ on $X$. Let $C$ be a rational curve on $X$ which meets the fibers $E_t$.
For any integer $N$, and any point $t$,  consider the points
$y\in \widetilde{E}_t$ (the desingularization of $E_t$), which
are rationally equivalent in $\widetilde{E}_t$ to the sum of a point
$x_t\in E_t\cap C$ (hence rationally equivalent to $c_X$)
and a $N$-torsion $0$-cycle on $\widetilde{E}_t$.

As $CH_0(X)$ has no torsion, the images  $y_t$ of these points in
$X$ are all rationally equivalent to $c_X$ in $X$. Their images  are
clearly parametrized  for $N$ large enough by a (maybe reducible)
curve $C_N\subset X$. Finally, the union over all $N$ of the points
$y_t$ above is Zariski dense in each $\widetilde{E}_t$, hence the
union of the curves $C_N$ is Zariski dense in $X$.

\end{proof}

\begin{proof}[Proof of Theorem \ref{autreforme}] The proof is by induction on $k$, the case $k=1$, $d=0$ being Lemma \ref{lemma} (the case $k=1$, $d=1$ is trivial).
Let $Z'$ be an irreducible component of the inverse image of
$Z$ in $X^k$. Let $p:Z'\rightarrow X$ be the first projection.
We distinguish two cases and note that they exhaust all possibilities, up to
replacing $Z'$ by another component $Z''$ deduced from $Z'$ by letting the symmetric group $\mathfrak{S}_k$ act.

\vspace{0.5cm}

{\it Case 1.} {\it The morphism $p:Z'\rightarrow X$ is dominant.}
For a curve $C\subset X$ parametrizing points rationally equivalent
to $c_X$, consider the hypersurface
$$Z'_C:=p^{-1}(C)\subset Z'.$$
Let $q:Z'\rightarrow X^{k-1}$ be the projection on the product of
the $k-1$ last factors. Assume first that ${\rm dim}\,q(Z'_C)={\rm
dim}\,Z'_C=k-d-1$. Note that all cycles of $X$ parametrized by
$q(Z'_C)$ are rationally equivalent in $X$. Indeed, an element $z$
of $Z'_C$ is of the form $(c,z')$ with $c\in C$ so that $c=c_X$ in
$CH_0(X)$. So the rational equivalence class of $z'$ is equal to
$z-c_X$ and is independent of $z'\in Z'_C$. Thus the induction
assumption applies and  the cycles of degree $k-1$ parametrized by
${\rm Im}\,q$ belong to $S^{k-1}_d(X)$. It follows in turn  that the
classes of the cycles parametrized by $Z'$ (or $Z$) belong to
$S_d^k(X)$. Indeed, as just mentioned above, a $0$-cycle $z$
parametrized by $Z'$ is rationally equivalent to $z=c_X+z'$ where
$z'\in S^{k-1}_d(X)$, so $z'$ is rationally equivalent to
$(k-d-1)c_X+z''$, $z''\in X^{(d)}$. Hence $z$ is rationally
equivalent in $X$ to $(k-d)c_X+z''$, for some $z''\in X^{(d)}$. Thus
$z\in S^k_d(X)$.

Assume to the contrary that ${\rm dim}\,q(Z'_C)<{\rm dim}\,Z'_C=k-d-1$ for any curve
$C$ as above. We use now the fact (see Lemma \ref{corodense})
that these curves $C$ are Zariski dense in $X$.
We can thus assume that there is a point
$x\in Z'_C$ which is generic in $Z'$, so that both $Z'$ and $Z'_C$ are smooth at $x$,
of respective dimensions $k-d$ and $k-d-1$.
The fact that ${\rm dim}\, q(Z'_C)<k-d-1$ implies that $q$ is
not of maximal rank $k-d$ at $x$ and as $x$ is generic in $Z'$, we conclude that
$q$ is of rank $<k-d$ everywhere on $Z'_{reg}$, so that
${\rm dim}\,{\rm Im}\,q\leq k-d-1$.

Now recall that all $0$-cycles parameterized by $Z'$ are rationally equivalent.
It follows that for any fiber $F$ of $q$, all points in $p(F)$ are rationally equivalent in $X$. This implies that all these points are rationally equivalent
to $c_X$ by Lemma \ref{lemma}.
This contradicts the fact that $p$ is surjective.

\vspace{0.5cm}

{\it Case 2.} {\it None of the projections $pr_i,\,i=1,\ldots, k$,
from $X^k$ to its factors is dominant.} Let $C_i:={\rm
Im}\,p_i\subset X$ if ${\rm Im}\,pr_i$ is a curve, and any curve
containing ${\rm Im}\,p_i$ if ${\rm Im}\,p_i$ is a point. Thus $Z'$
is contained in $C_1\times\ldots\times C_k$.

Let $C$ be a non necessarily irreducible ample curve such that all points
in $C$ are rationally equivalent to $c_X$.
Observe that the line bundle
$pr_1^*\mathcal{O}_X(C)\otimes \ldots\otimes pr_k^*\mathcal{O}_X(C)$ on $X^k$
has its restriction to
 $C_1\times\ldots\times C_k$ ample and that its $k-d$-th self-intersection
on $C_1\times\ldots\times C_k$ is  a complete intersection of ample
divisors and is equal to
\begin{eqnarray}
\label{cyclepositif}
W:=(k-d)!\sum_{i_1<\ldots<i_{k-d}}p_{i_1}^*\mathcal{O}_{C_1}(C)\cdot \ldots\cdot p_{i_{k-d}}^*\mathcal{O}_{C_{k-d}}(C)
\end{eqnarray}
in
$CH^{k-d}(C_1\times \ldots\times C_k)$,
where the $p_i$ are the projections from $\prod_iC_i$ to its factors.

The cycle $W$ of (\ref{cyclepositif}) is as well
the restriction to $C_1\times\ldots \times C_k$
of the effective cycle
\begin{eqnarray}
\label{cycleeffectif}
W':=(k-d)!\sum_{i_1<\ldots<i_{k-d}}pr_{i_1}^*C\cdot\ldots\cdot pr_{i_{k-d}}^*C.
\end{eqnarray}
As
 the $k-d$ dimensional subvariety $Z'$ of $C_1\times\ldots\times C_k$
  has a nonzero intersection with $W$, it follows that
  the intersection number of
  $Z'$ with $W'$ is nonzero in $X^k$, hence
  that
  $$Z'\cap pr_{i_1}^*C\cdot\ldots\cdot pr_{i_{k-d}}^*C\not=\emptyset$$
  for some choice of indices $i_1<\ldots<i_{k-d}$.
  This means that there exists a cycle in $Z$ which is of the form
  $$z=z'+z''$$
  with  $z'\in C^{(k-d)}$ and $z''\in X^{(d)}$. As $z'$ is supported on $C$,
  it is equal to $(k-d)c_X$ in $CH_0(X)$ and  we conclude that
  $z\in S_d^k(X)$.

\end{proof}
Let us now prove the following variant of Theorem
\ref{autreforme}. Instead of a family of $0$-cycles (that is, elements of $X^{(k)}$),
we now consider families of $0$-dimensional {\it subschemes} (that is, elements of
$X^{[k]}$) :
\begin{variant}\label{variant} Let $k\geq d$ and let $Z\subset X^{[k]}$ be a Zariski closed irreducible
algebraic subset of dimension $k-d$. Assume
that all cycles of $X$ parametrized by $Z$ are rationally equivalent
in $X$.
Then the class of these cycles belongs to $S_d^k(X)$.
\end{variant}
\begin{proof} Let $z\in Z$ be a general point. The cycle $c(z)$ of  $z$,
where $c:X^{[k]}\rightarrow X^{(k)}$ is the Hilbert-Chow morphism , is of the
form $\sum_ik_i x_i$, with $\sum_ik_i=k$, where $x_i$ are $k'$ distinct points of $X$.
We have of course
\begin{eqnarray}\label{etdun}k'=k-\sum_i(k_i-1).
\end{eqnarray}

The fiber of $c$ over a cycle of the form $\sum_ik_i x_i$ as above is
of dimension $\sum_i(k_i-1)$ (see for example \cite{ellingsrud}). It follows that
the image $Z_1$ of $Z$ in $X^{(k)}$ is of dimension $\geq k-d-\sum_i(k_i-1)$.
By definition, $Z_1$ is contained in a multiplicity-stratum of $X^{(k)}$ where
the support of the considered cycles has cardinality $\leq k'$.
Let $Z'_1\subset X^{k'}$ be the set of $(x_1,\ldots,x_{k'})$ such that
$\sum_ik_ix_i\in c(Z)$. Then the morphism
$$Z'_1\rightarrow Z_1,\,(x_1,\ldots,x_{k'})\mapsto\sum_ik_ix_i$$
is finite and surjective, so that
\begin{eqnarray}\label{etdeux}{\rm dim}\,Z'_1= {\rm dim}\,Z_1\geq k-d-\sum_i(k_i-1),
\end{eqnarray}
which by (\ref{etdun}) can be rewritten as
$${\rm dim}\,Z'_1= {\rm dim}\,Z_1\geq k'-d.$$

Note that by construction, $Z'_1$ parameterizes $k'$-uples $(x_1,\ldots,x_{k'})$
with the property that
$\sum_ik_ix_i$ is rationally equivalent to a constant cycle.

The proof of the variant \ref{variant} then concludes with the following statement :
\begin{proposition} \label{prop4aout}Let $l$ be a positive integer,
$k_1>0,\ldots,k_{l}>0$ be positive multiplicities.
Let $Z$ be a  closed algebraic subset of
$X^{l}$. Assume that ${\rm dim} \,Z\geq l-d$ and
the cycles $\sum_ik_ix_i$, $(x_1,\ldots,x_{l})\in Z$, are all rationally equivalent in $X$. Then the class
of the cycles $\sum_ik_ix_i$, $(x_1,\ldots,x_{l})\in Z$, belongs to
$S_d^k(X)$, where $k=\sum_ik_i$.

\end{proposition}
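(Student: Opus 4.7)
The plan is to extend the two-case argument used for Theorem \ref{autreforme} to the weighted setting, by induction on $l$ (with the multiplicities $k_i$ and the integer $d$ allowed to vary freely in the inductive hypothesis). I assume $Z$ is irreducible of dimension exactly $l-d$; a strict inequality is absorbed by decreasing $d$.

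First, suppose some projection $pr_i:Z\to X$ is dominant, say $i=1$. Choose via Lemma \ref{corodense} a curve $C\subset X$ of points rationally equivalent to $c_X$, set $Z_C:=Z\cap pr_1^{-1}(C)$, and let $q:Z_C\to X^{l-1}$ be the projection onto the last $l-1$ factors. In the favourable sub-case $\dim q(Z_C)=l-d-1$ for some such $C$, the cycles $\sum_{i\geq 2}k_ix_i$ parametrized by $q(Z_C)$ are rationally equivalent (because $k_1x_1\equiv k_1c_X$ on $Z_C$); applying the inductive hypothesis with data $(l-1,d,k-k_1,(k_2,\ldots,k_l))$ yields $\sum_{i\geq 2}k_ix_i\in S_d^{k-k_1}(X)$, and adding $k_1c_X$ gives the conclusion. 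Otherwise $q|_Z$ has positive-dimensional fibers for every such $C$; using that $CH_0(X)$ is torsion-free for a $K3$ surface (by Roitman), $k_1y\equiv k_1y'$ forces $y\equiv y'$, so by Lemma \ref{lemma} the first coordinates over a fiber are all $\equiv c_X$, contradicting the dominance of $pr_1$.

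The subtler case is when no $pr_i$ is dominant, so $Z\subset C_1\times\ldots\times C_l$ with each $C_i$ an irreducible curve in $X$. Pick an ample effective curve $C\subset X$ of points rationally equivalent to $c_X$, and observe that on $C_1\times\ldots\times C_l$ the $(l-d)$-th self-intersection of the ample class $\sum_ipr_i^*C$ collapses, via $(pr_i^*C)^2|_{C_i}=0$, to $(l-d)!\sum_{|J|=l-d}\prod_{i\in J}pr_i^*C$ and has positive intersection with $Z$. Hence, for some subset $J$ with $|J|=l-d$, one finds $(y_1,\ldots,y_l)\in Z$ such that $y_i\in C$, hence $y_i\equiv c_X$, for $i\in J$; writing $|J^c|=d$ and $D:=\sum_{i\in J^c}k_i\geq d$, the cycle decomposes as $\sum_ik_iy_i\equiv(k-D)c_X+\sum_{i\in J^c}k_iy_i$. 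The hardest step, and the genuine novelty with respect to Theorem \ref{autreforme}, is to upgrade this into the form required by $S_d^k(X)$, namely an effective cycle of degree exactly $d$ plus a multiple of $c_X$: when $D>d$, the residual $\sum_{i\in J^c}k_iy_i$ lives on only $d$ support points with larger multiplicities, and one must show it lies in $S_d^D(X)$. I expect to handle this either by weighting the ample class so that $J$ is forced to contain every index with $k_i\geq 2$ (reducing to $D=d$), or by a sub-induction on the total weight $k$ applied to this residual cycle, falling back on Theorem \ref{autreforme} once all multiplicities equal $1$.
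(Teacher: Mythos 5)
Your reduction to the two cases, the handling of Case 1 (including the use of Roitman's torsion-freeness to pass from $k_1x_1\equiv k_1x_1'$ to $x_1\equiv x_1'$), and the intersection-theoretic argument in Case 2 all match the paper's proof. But you have correctly isolated the one genuinely new difficulty of the weighted setting --- showing that the residual cycle $\sum_{i\in J^c}k_iy_i$, of degree $D>d$ but supported on only $d$ points, lies in $S_d^D(X)$ --- and then left it unresolved, offering two strategies neither of which works. The weighting idea fails because the $(l-d)$-th self-intersection of $\sum_ia_ipr_i^*C$ on $C_1\times\cdots\times C_l$ is a sum of \emph{effective} terms indexed by the subsets $J$; positivity of the total only guarantees that \emph{some} term meets $Z$, and no choice of positive weights $a_i$ can force that term to be one whose complement $J^c$ avoids the indices with $k_i\geq 2$. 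The sub-induction idea fails because the residual is a single $0$-cycle, not a positive-dimensional family, so there is nothing to which Theorem \ref{autreforme} or the inductive hypothesis applies; and the naive decomposition $\sum_{i\in J^c}k_iy_i=\sum_{i\in J^c}y_i+\sum_{i\in J^c}(k_i-1)y_i$ would require $\sum_{i\in J^c}(k_i-1)y_i\equiv(D-d)c_X$, which is false for general points.

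The missing ingredient is the paper's Lemma \ref{lemma4aout}: for \emph{any} $d$ points $x_1,\ldots,x_d\in X$ and any integers $k_i$, the cycle $\sum_ik_ix_i$ lies in $S_d(X)$. This is not proved by intersection theory or induction at all, but by combining two external inputs: O'Grady's characterization of $S_d(X)$ as the union of the images $j_*\colon CH_0(C)\to CH_0(X)$ over curves $C\subset X$ of geometric genus at most $d$ (Proposition \ref{procharogrady}), and the Mori--Mukai theorem producing a curve of geometric genus at most $d$ through any $d$ prescribed points of $X$. Since $\sum_ik_ix_i$ is supported on such a curve, it lies in $S_d(X)$ regardless of the multiplicities. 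Without an input of this kind --- which changes the nature of the argument from the purely geometric one you set up --- Case 2 of your induction does not close.
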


\end{proof}
 For the proof of Proposition \ref{prop4aout}, we have to start with the following
Lemma:
\begin{lemma} \label{lemma4aout} Let $x_1,\ldots,x_d\in X$ and let $k_i\in \mathbf{Z}$.
Then $\sum_ik_ix_i\in S_d^k(X)$, $k=\sum_ik_i$.
\end{lemma}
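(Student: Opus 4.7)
The strategy is to reduce Lemma \ref{lemma4aout} to a single-point statement and handle that by an Abel--Jacobi argument on an arithmetic-genus-one curve through $x$.

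First I would establish the following sublemma: \emph{for any $x\in X$ and any integer $n\geq 1$, there exists a point $y\in X$ with $nx=y+(n-1)c_X$ in $CH_0(X)$, i.e. $nx\in S_1^n(X)$.} Given this, Lemma \ref{lemma4aout} follows immediately. Treating the positive multiplicities (the zero ones contribute nothing and can only decrease the ``$d$'', which is harmless since $S_{d'}^k\subset S_d^k$ for $d'\leq d$; the negative case, if needed, is handled by the analogous statement applied to $-nx$), we write $k_ix_i=y_i+(k_i-1)c_X$ for each $i$ and sum over $i$ to obtain
$$\sum_ik_ix_i=\sum_iy_i+(k-d)c_X\in S_d^k(X),$$
with $\sum_iy_i$ effective of degree $d$.

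To prove the sublemma, I would invoke the Mori--Mukai theorem (already used in Lemma \ref{corodense}): $X$ carries a $1$-parameter family of integral curves $E_t$ of arithmetic genus $1$, whose union is Zariski dense and hence covers a dense open subset of $X$. Pick $E=E_{t_0}$ containing $x$ in its smooth locus and let $\nu:\tilde E\to E$ be the normalization. If $\tilde E$ has geometric genus $0$, then all points of $E$ are rationally equivalent in $X$ (push-forward of the tautological equivalence on $\tilde E\simeq\mathbb{P}^1$), so by Lemma \ref{lemma} they all equal $c_X$, giving $x=c_X$ and $nx=nc_X\in S_1^n(X)$ trivially. Otherwise $\tilde E$ is a smooth elliptic curve; by the argument of Lemma \ref{corodense} one can pick a rational curve $R\subset X$ meeting $E$ and take $p\in E\cap R$, so $[p]=c_X$ in $CH_0(X)$. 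On the elliptic curve $\tilde E$, the divisor class $n\tilde x-(n-1)\tilde p$ has degree $1$, so by Abel--Jacobi ($\mathrm{Pic}^1(\tilde E)\cong \tilde E$) it is linearly equivalent to a unique point $\tilde y\in\tilde E$; pushing forward to $X$ gives $nx=y+(n-1)c_X$ in $CH_0(X)$, with $y:=\nu(\tilde y)$.

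The hard part is the technical general-position checking: one needs $x$ to lie on the smooth locus of some elliptic member $E_t$ that is met by a rational curve on $X$. This parallels the setup of Lemma \ref{corodense}; any exceptional $x$'s missed by a single Mori--Mukai family should be handled by a limit/specialization argument along the family, or by replacing the family with another arithmetic genus $1$ curve through $x$.
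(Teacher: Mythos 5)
Your argument is correct in substance but takes a genuinely different route from the paper's. The paper disposes of this lemma in two lines: it quotes O'Grady's characterization of $S_d(X)$ (Proposition \ref{procharogrady}: $z\in S_d(X)$ iff $z$ is supported on a curve whose desingularization has total genus $\leq d$) and invokes Mori--Mukai to produce a single, possibly reducible, curve of total geometric genus $\leq d$ passing through all of $x_1,\ldots,x_d$; the cycle $\sum_ik_ix_i$ is then supported on that curve and one is done. You instead decompose the cycle point by point and prove by hand, via Abel--Jacobi on the normalization of a genus-one curve through each $x_i$, exactly the direction of O'Grady's criterion that is needed here in the genus-$\leq 1$ case. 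This buys self-containedness (you never need Proposition \ref{procharogrady}), at the cost of redoing, one point at a time, the same geometric input --- that $X$ is covered by curves of geometric genus $\leq 1$ meeting a rational curve --- which both proofs ultimately rest on. Two cleanups would tighten your write-up. First, you do not need $x$ to lie in the smooth locus of a member $E_t$: choose any preimage $\tilde x$ of $x$ on the normalization $\tilde E$ and push forward; with that, the general-position issue you flag at the end reduces to the standard fact that the closure of the Mori--Mukai family covers $X$ and that components of limit members still have geometric genus $\leq 1$, which is precisely what the paper's own appeal to \cite{morimukai} uses. Second, the detour through ``$-nx$'' for negative $k_i$ is unnecessary and does not quite typecheck (your sublemma concerns cycles of positive degree): simply state the sublemma for all $n\in\mathbf{Z}$, since the class $n\tilde x-(n-1)\tilde p$ has degree $1$ on $\tilde E$ for every integer $n$ and is therefore represented by a (unique) point of the elliptic curve regardless of the sign of $n$; the genus-zero case is likewise sign-independent.
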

\begin{proof} We use the following characterization of $S_d(X)$ given
by O'Grady: \begin{proposition}\label{procharogrady} (O'Grady
\cite{ogrady}) A cycle $z\in CH_0(X)$ belongs to $S_d(X)$ if and
only if there exists a (possibly singular, possibly reducible) curve
$j:C\subset X$, such that the genus of the desingularization of $C$
(or the sum of the genera of its components if $C$ is reducible) is
non greater than $d$ and $z$ belongs to ${\rm
Im}\,(j_*:CH_0(C)\rightarrow CH_0(X))$.

\end{proposition}
Let now $x_1,\ldots, x_d$ be as above. There exists by \cite{morimukai} a curve $C\subset X$, whose desingularization has
genus $\leq d$ and  containing $x_1,\ldots, x_d$. Thus for any $k_i$, the cycle
$\sum_ik_ix_i$ is supported on $C$, which proves the Lemma by Proposition
\ref{procharogrady}
\end{proof}
\begin{proof}[Proof of Proposition \ref{prop4aout}]
Proposition \ref{prop4aout}
is proved exactly
as Theorem\ref{autreforme}, by induction on $l$. In case 1 considered in the induction step, we apply the same argument as in that proof. In case 2
considered in the induction step, using the same notations as in that proof,  we conclude
that there is in $Z$ a  $l$-uple $(x_1,\ldots, x_l)$ satisfying (up to permutation of the indices)
$$x_{d+1}\,\ldots,x_{l}\in C,$$
and as any point of $C$ is rationally equivalent to
$c_X$, we find that
$$\sum_ik_ix_i=(\sum_{i>d}k_i)c_X+\sum_{l\leq i\leq d}k_ix_i.$$
By Lemma \ref{lemma4aout}, $\sum_{1\leq i\leq d}k_ix_i\in S_d(X)$, so that
$\sum_ik_ix_i\in S_d(X)$.

\end{proof}
As mentioned in the introduction, Theorem \ref{autreforme} in the case
 $d=0$ provides the following characterization of the cycle $kc_X$, $k>0$: It is the only degree $k$
 $0$-cycle $z$ of $X$, whose orbit $O_z\subset X^{(k)}$ is $k$-dimensional
 (cf. Corollary
 \ref{corochar}).
 Let us give a slightly more direct proof in this case.
 We use the following Lemma \ref{lag}:
  Let $V$ be a $2$-dimensional complex vector space. Let $\eta\in
 \bigwedge^2V^*$ be a nonzero generator, and let
 $\omega\in \bigwedge^{1,1}_\mathbf{R}(V^*)$ be a positive real $(1,1)$-form on $V$.
 \begin{lemma}\label{lag} Let $W\subset V^k$ be a $k$-dimensional complex vector subspace which is Lagrangian for the nondegenerate $2$-form
 $\eta_k:=\sum_ipr_i^*\eta$ on $V^k$, where the $pr_i$'s are the projections from $V^k$
 to $V$. Then $\prod_ipr_i^*\omega$ restricts to a volume form on
 $W$.

 \end{lemma}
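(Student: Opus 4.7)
The plan is to reduce Lemma \ref{lag} to a transversality claim about ``coordinate Lagrangians'' of $V^k$ and then prove the transversality by a dimension count combined with a multilinear polynomial argument. After choosing a basis $(e_1,e_2)$ of $V$ in which $\omega=\frac{i}{2}(dz_1\wedge d\bar z_1+dz_2\wedge d\bar z_2)$, I attach to each $a=(a_1,\ldots,a_k)\in\{1,2\}^k$ the decomposable holomorphic $(k,0)$-form $\Omega_a:=pr_1^*dz_{a_1}\wedge\cdots\wedge pr_k^*dz_{a_k}$ on $V^k$. A direct expansion gives
\[
\prod_i pr_i^*\omega\;=\;(-1)^{k(k-1)/2}\Bigl(\frac{i}{2}\Bigr)^{k}\sum_{a\in\{1,2\}^k}\Omega_a\wedge\overline{\Omega_a},
\]
and writing $\Omega_a|_W=\lambda_a\sigma_W$ for a fixed holomorphic volume form $\sigma_W$ on $W$, restriction to $W$ becomes $\bigl(\sum_a|\lambda_a|^2\bigr)\,\mathrm{vol}_W$, where $\mathrm{vol}_W$ is the positive real volume form on $W$ induced by the ambient Hermitian structure. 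Hence the lemma amounts to showing that $\lambda_a\ne 0$ for some $a$; equivalently, the projection $\pi_a:V^k\to\mathbf{C}^k$, $(v_1,\ldots,v_k)\mapsto(z_{a_1}(v_1),\ldots,z_{a_k}(v_k))$, is an isomorphism on $W$ for some $a$, i.e., the coordinate Lagrangian $K_a:=\ker\pi_a=\bigoplus_i\mathrm{span}(e_{3-a_i,i})\subset V^k$ (itself $k$-dimensional and isotropic for $\eta_k$) is transverse to $W$.

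The Lagrangian hypothesis on $W$ enters in a single place: it forces each projection $pr_i|_W:W\to V$ to be nonzero. Indeed, if $pr_i|_W=0$ then $W\subset\ker pr_i\cong V^{k-1}$, but $\eta_k$ restricts on $\ker pr_i$ to the nondegenerate form $\eta_{k-1}=\sum_{j\ne i}pr_j^*\eta$ of rank $2(k-1)$, whose Lagrangian subspaces have dimension only $k-1$, contradicting $\dim W=k$. Armed with this, I consider the polynomial
\[
P(L_1,\ldots,L_k):=\det\bigl(w_1,\ldots,w_k,\iota_1(u_1),\ldots,\iota_k(u_k)\bigr)
\]
on $(\mathbf{P}(V))^k=(\mathbf{P}^1)^k$, where $w_1,\ldots,w_k$ is a fixed basis of $W$, $u_i=\lambda_ie_1+\mu_ie_2$ generates $L_i$ for $[\lambda_i:\mu_i]\in\mathbf{P}^1$, and $\iota_i:V\hookrightarrow V^k$ is the inclusion as the $i$-th factor. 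The vanishing of $P$ coincides with $W\cap(L_1\times\cdots\times L_k)\ne 0$ and $P$ is multi-homogeneous of bi-degree $(1,1,\ldots,1)$. Using the incidence variety $I:=\{([w],(L_i))\in\mathbf{P}(W)\times(\mathbf{P}^1)^k:w^i\in L_i\text{ for all }i\}$, together with the fact that a generic $[w]\in\mathbf{P}(W)$ now has all components $w^i\ne 0$, the fiber of $I\to\mathbf{P}(W)$ over such a generic $[w]$ is a single point $([w^i])_i$; hence $\dim I=k-1$, and the image of $I$ in $(\mathbf{P}^1)^k$, which is the vanishing locus of $P$, has dimension at most $k-1$, so $P\not\equiv 0$. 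Since the $2^k$ coefficients of a multilinear polynomial of bi-degree exactly $(1,\ldots,1)$ are precisely its values at the $2^k$ corner points of $(\mathbf{P}^1)^k$, the nonvanishing of $P$ means that $P$ is nonzero at some corner, providing the required transverse coordinate Lagrangian.

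The main obstacle is the transversality itself, for which the Lagrangian assumption is essential: for a generic $k$-dimensional subspace some $pr_i|_W$ could be identically zero, collapsing the incidence-variety count and potentially forcing $P\equiv 0$, and it is only the isotropy of $W$ that rules this out. The other crucial ingredient is that $P$ has bi-degree exactly $(1,\ldots,1)$, so that the $2^k$ corners form a unisolvent set of evaluation functionals on the $2^k$-dimensional space of multilinear polynomials and detect every coefficient.
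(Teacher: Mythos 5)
Your reduction of the lemma to the existence of a multi-index $a$ with $\Omega_a|_W\neq 0$ is correct (including the sign bookkeeping and the corner-evaluation principle for multilinear forms), and your route is genuinely different from the paper's, which argues by induction on $k$, splitting off one factor of $V$ at a time via a linear form $\mu$ such that $\ker(pr_1^*\mu|_W)$ projects isomorphically onto a Lagrangian of $V^{k-1}$. However, there is a genuine gap in your proof that $P\not\equiv 0$: from ``the fiber of $I\to\mathbf{P}(W)$ over a generic $[w]$ is a single point'' you conclude $\dim I=k-1$. That inference only bounds the components of $I$ that dominate $\mathbf{P}(W)$; over the loci where some of the coordinates $w^i$ vanish, the fibers are positive-dimensional products of $\mathbf{P}^1$'s, and a component of $I$ sitting over such a locus can be larger than $k-1$. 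The point is that the only consequences of the Lagrangian hypothesis you actually invoke are $\dim W=k$ and $pr_i|_W\neq 0$ for all $i$, and these do not suffice. Concretely, for $k=3$ take $W=(0\times 0\times V)\oplus\mathbf{C}\,(e_1,e_1,0)$: all three projections are nonzero, yet $W\cap(L_1\times L_2\times L_3)\supset 0\times 0\times L_3\neq 0$ for every choice of lines, so $P\equiv 0$ (and indeed $\prod_ipr_i^*\omega$ restricts to $0$ on this non-Lagrangian $W$); here $I$ has a component of dimension $3>k-1$ lying over $\mathbf{P}(0\times 0\times V)$.

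The repair is available with the tool you already used, but it must be applied to every subset of indices, not only to singletons: for each $S\subset\{1,\ldots,k\}$, the subspace $W\cap\{v:\,v_i=0\ \forall i\in S\}$ is isotropic for the nondegenerate form $\sum_{j\notin S}pr_j^*\eta$ on the factor $V^{k-|S|}$, hence has dimension at most $k-|S|$. The stratum of $\mathbf{P}(W)$ where the coordinates indexed by $S$ vanish therefore has dimension at most $k-|S|-1$, while the fiber of $I$ over it has dimension $|S|$, so every stratum of $I$ has dimension at most $k-1$; since $I\to(\mathbf{P}^1)^k$ is proper, the vanishing locus of $P$ is a proper closed subset and $P\not\equiv 0$. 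With this amendment your argument is complete, and it yields a slightly stronger geometric statement than the paper's inductive proof, namely transversality of $W$ to some coordinate Lagrangian, obtained globally rather than one factor at a time.
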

 \begin{proof} The proof is by induction on $k$. Let $\pi:W\rightarrow V^{k-1}$ be the
 projector on the product of the last $k-1$ summands.
 We can clearly assume up to changing the order of factors, that ${\rm dim}\,{\rm Ker}\,\pi<2$. As
 ${\rm dim}\,{\rm Ker}\,\pi\leq 1$, we can choose a linear form $\mu$ on $V$ such that
   the $k-1$-dimensional
  vector space $W_\mu:={\rm ker}\,pr_1^*\mu_{\mid W}$ is sent injectively by
  $\pi$ to a $k-1$-dimensional subspace
  $W'$ of  $V^{k-1}$. Furthermore, since
  $W$ is Lagrangian for $\eta_k$, $W'$ is Lagrangian for
  $\eta_{k-1}$ because  $W_\mu\subset {\rm Ker}\,\mu\times V^{k-1}$,  and
  on ${\rm Ker}\,\mu\times V^{k-1}$, $\eta_k=\pi^*\eta_{k-1}$.
  By the induction hypothesis, the form
  $\prod_{i>1}pr_i^*\omega$ restricts to a volume form on
  $W'$, where the projections here are considered as restricted to $0\times V^{k-1}$,
  and it follows that
  $$ pr_1^*(\sqrt{-1}\mu\wedge\overline{\mu})\wedge \prod_{i>1}pr_i^*\omega$$ restricts to a volume form on
  $W$. It immediately follows that
  $\prod_{i\geq 1}pr_i^*\omega$ restricts to a volume form on
  $W$ since for a positive number $\alpha$, we have
  $$ \omega\geq  \alpha\sqrt{-1}\mu\wedge \overline{\mu}$$
  as real $(1,1)$-forms on $V$.

 \end{proof}
\begin{proof}[Proof of Corollary \ref{corochar}] Let $z\in CH_0(X)$ be a cycle  of degree $k$
 such that ${\rm dim}\,O_z\geq k$.
 Let $\Gamma\subset X^k$ be an irreducible component of the inverse
 image of a $k$-dimensional component of $O_z\subset X^{(k)}$ via the
 map $X^k\rightarrow X^{(k)}$.
 By Mumford's theorem
 \cite{mumford},  using the fact that all the $0$-cycles parameterized by $\Gamma$ are rationally equivalent in
 $X$, $\Gamma$ is Lagrangian for the symplectic form
 $\sum_ipr_i^*\eta_X$ on $X^k$, where $\eta_X\in H^{2,0}(X)$ is a generator.
 Let $L$ be an ample line bundle on $X$ such that there is a
 curve $D\subset X$ in the linear system $|L|$, all of whose components
 are rational.
 We claim that
 $$\Gamma\cap D^k\not=\emptyset.$$
 Indeed, it suffices to prove that the intersection number
 \begin{eqnarray}\label{intnumber}
 [\Gamma]\cdot [D^k]
 \end{eqnarray}
 is positive.
 Let $\omega_L\in H^{1,1}(X)$ be a positive representative
 of $c_1(L)$. Then
 (\ref{intnumber}) is equal to

 \begin{eqnarray}\label{integral}
 \int_{\Gamma_{reg}}\prod_ipr_i^*\omega_L.
 \end{eqnarray}
 By Lemma \ref{lag}, the form $\prod_ipr_i^*\omega_L$ restricts to a volume form
 on $\Gamma$ at any smooth point of $\Gamma$ and the integral (\ref{integral})
 is thus positive.
\end{proof}
\section{Second Chern class of simple vector bundles\label{sec2}}

This section is devoted to the proof of Theorem \ref{ogrady}. Recall first from
\cite{ogrady}, that in order to prove the result for a vector bundle $F$ on
$X$, it suffices to prove it for $F\otimes L$, where $L$ is a line bundle on
$X$. Choosing $L$ sufficiently ample, we can thus assume that
$F$ is generated by global sections, and furthermore that
\begin{eqnarray}\label{van}H^1(X,F^*)=0.
\end{eqnarray}
Let $r={\rm rank}\,F$. Choose a general $r-1$-dimensional subspace
$W$
 of $H^0(X,F)$,  and consider the evaluation morphism:
$$e_W:W\otimes\mathcal{O}_X\rightarrow F.$$
The following result is well-known
 (cf. \cite[5.1]{huybrechtslehn}).
\begin{lemma}  \label{necessary} The morphism $e_W$ is generically injective, and the locus $Z\subset X$
where its rank is $<r-1$ consists of $k$ distinct reduced points, where $k=c_2^{top}(F)$.

\end{lemma}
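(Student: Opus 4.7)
My plan is to prove this standard Bertini-type lemma in three steps, using a dimension count in the Grassmannian $G := G(r-1, H^0(X,F))$ together with a Chern-class computation of the length of the degeneracy locus.

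First, I establish generic injectivity of $e_W$ and $0$-dimensionality of its degeneracy locus. Since $F$ is globally generated, the evaluation $H^0(X,F)\otimes\mathcal{O}_X\twoheadrightarrow F$ is surjective, with kernel $K$ a vector bundle of rank $N-r$, where $N=h^0(X,F)$. The pointwise kernel of $e_W$ at $x$ is $W\cap K_x$, whose expected dimension $\max(0,(r-1)+(N-r)-N)$ is zero. More precisely, the incidence variety
\[
\mathcal{I} = \{(W,x)\in G\times X : W \cap K_x \neq 0\}
\]
fibers over $X$ with fibers equal to the codimension-$2$ Schubert cycle in $G$ consisting of $(r-1)$-planes meeting the fixed $(N-r)$-plane $K_x$ nontrivially, and therefore has codimension $2$ in $G\times X$. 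Hence for generic $W$ the locus $Z=Z_W\subset X$ where $e_W$ degenerates has codimension $\geq 2$ in $X$, i.e., is $0$-dimensional.

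Second, I compute the length of $Z$ via Chern classes. Generic injectivity yields a short exact sequence
\[
0 \to W\otimes\mathcal{O}_X \xrightarrow{e_W} F \to \mathcal{Q} \to 0,
\]
and since any torsion in $\mathcal{Q}$ is supported in codimension $\geq 2$ on a smooth surface, $\mathcal{Q}$ is torsion-free of rank one; hence $\mathcal{Q}\cong I_Z\otimes\det F$ for a $0$-dimensional subscheme $Z\subset X$ supported on the degeneracy locus. Comparing total Chern classes then identifies $[Z]=c_2(F)$ in $CH^2(X)$, so $\mathrm{length}(Z)=c_2^{\mathrm{top}}(F)=k$.

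Third, and this is the main obstacle, I show that $Z$ is reduced, hence consists of $k$ distinct points. The key point is to control the corank of $e_W|_x$ along $Z$: for generic $W$, the deeper Schubert stratum $\{(W,x):\dim(W\cap K_x)\geq 2\}$ has fiberwise codimension strictly greater than $2$ in $G$ (one computes codimension $6$ when $r\geq 3$; the case $r=2$ collapses to the classical Bertini theorem for sections of a globally generated rank-$2$ bundle). Its incidence variety therefore fails to dominate $G$, and generic $W$ has the property that the corank of $e_W|_x$ is exactly one at every $x\in Z$. At such a point $Z$ is locally cut out by two functions (two of the maximal minors of the $r\times(r-1)$ matrix representing $e_W$), and a final transversality argument of the same Bertini-type flavour shows that for generic $W$ these two functions form a regular system of parameters at $x$. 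Combined with $\mathrm{length}(Z)=k$, this yields that $Z$ is a disjoint union of $k$ reduced points.
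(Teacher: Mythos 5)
Your proof is correct and follows essentially the same route as the paper: both study the universal degeneracy locus in $G\times X$ fibered over $X$, check that the deeper (corank $\geq 2$) stratum has dimension less than $\dim G$, and deduce the count $k=c_2^{\mathrm{top}}(F)$ from the exact sequence $0\to W\otimes\mathcal{O}_X\to F\to \mathcal{I}_Z\otimes\det F\to 0$. The ``final transversality argument'' you invoke for reducedness is exactly the paper's application of Sard's theorem to the projection $p_1\colon G_{deg}\setminus G_{deg}^{sing}\to G$, so the argument is complete.
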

\begin{proof} Let $G=Grass(r-1,H^0(X,F))$ be the Grassmannian of
$r-1$-dimensional subspaces of $H^0(X,F)$. Consider the following
universal subvariety of $G\times X$:
$$G_{deg}:=\{(W,x)\in G\times X, {\rm rank}\,e_{W,x}<r-1\}.$$
Since $F$ is generated by sections, $G_{deg}$ is a fibration over $X$,
with fibers smooth away from the singular locus
$$G_{deg}^{sing}:=\{(W,x)\in G\times X, {\rm rank}\,e_{W,x}<r-2\}.$$
Furthermore, we have
$${\rm dim}\,G_{deg}={\rm dim}\,(G\times X)-2={\rm dim}\,G$$
and ${\rm dim}\,G_{deg}^{sing}<{\rm dim}\,G$.

Consider the first projection: $p_1:G_{deg}\rightarrow G$.
It follows from the observations above and from Sard's theorem that for general $W\in G$, $p_1^{-1}(W)$ avoids $G_{deg}^{sing}$
 and consists of finitely many reduced points
in $X$.
The statement concerning the number $k$ of points follows
from \cite[14.3]{fu}, or from the following argument
that we will need later on:
Given a $W$ such that the morphism $e_W$ is generically injective, and the locus
$Z_W$
where its rank is $<r-1$ consists of $k$ distinct reduced points,
we have an exact sequence
\begin{eqnarray}\label{exact}
0\rightarrow W\otimes \mathcal{O}_X\rightarrow F\rightarrow \mathcal{I}_{Z_W}\otimes \mathcal{L}\rightarrow0,\end{eqnarray}
where $\mathcal{L}={\rm det}\,F$.
Hence $c_2(F)=c_2(\mathcal{I}_Z\otimes \mathcal{L})=c_2(\mathcal{I}_Z)=Z$, and in particular
$c_2^{top}(F)={\rm deg}\,Z$.
This proves the lemma.
\end{proof}

By Lemma \ref{necessary}, we have a rational map
$$\phi:G\dashrightarrow X^{(k)},\,W\mapsto c(Z_W),$$
where $c:X^{[k]}\rightarrow X^{(k)}$ is the Hilbert-Chow morphism.

\begin{proposition} \label{propinj}If $F$ is simple and satisfies  the assumption (\ref{van}), the rational map $\phi$ is generically
one to one on its image.
\end{proposition}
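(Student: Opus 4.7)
The idea is to reconstruct $W$ from the $0$-cycle $\phi(W)=c(Z_W)$, exploiting the simplicity of $F$ and the vanishing $H^1(X,F^*)=0$ through a single cohomological calculation.

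First I would reduce the problem to a scheme-theoretic one. By Lemma \ref{necessary}, for generic $W$ the locus $Z_W$ consists of $k$ distinct reduced points, so the Hilbert--Chow fiber $c^{-1}(c(Z_W))$ is a single reduced point of $X^{[k]}$. Hence if $W'\in G$ satisfies $\phi(W')=\phi(W)$, then necessarily $Z_{W'}=Z_W=:Z$ as subschemes of $X$, and it suffices to show that $W\subset H^0(X,F)$ is determined by $Z$.

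The key step is to show that the surjection $q:F\twoheadrightarrow\mathcal{I}_Z\otimes\mathcal{L}$ appearing in (\ref{exact}) is unique up to a nonzero scalar. Applying ${\rm Hom}(F,-)$ to (\ref{exact}) gives
$$W\otimes H^0(X,F^*)\to{\rm End}(F)\to{\rm Hom}(F,\mathcal{I}_Z\otimes\mathcal{L})\to W\otimes H^1(X,F^*).$$
Simplicity of $F$ gives ${\rm End}(F)=\mathbb{C}\cdot{\rm id}_F$, and the vanishing (\ref{van}) kills the last term, so the map ${\rm End}(F)\to{\rm Hom}(F,\mathcal{I}_Z\otimes\mathcal{L})$ is surjective. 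Since ${\rm id}_F$ maps to $q\neq 0$, this arrow is also injective, and we conclude that ${\rm Hom}(F,\mathcal{I}_Z\otimes\mathcal{L})=\mathbb{C}\cdot q$.

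Finally, any $W'\in G$ with $Z_{W'}=Z$ fits in its own exact sequence of the form (\ref{exact}), producing a surjection $q':F\twoheadrightarrow\mathcal{I}_Z\otimes\mathcal{L}$. By the previous step $q'$ is a nonzero scalar multiple of $q$, so $\ker q'=\ker q$ as subsheaves of $F$; passing to global sections gives $W'=W$. The substantive step is the middle one, where both hypotheses on $F$ enter essentially through ${\rm End}(F)=\mathbb{C}$ and $H^1(X,F^*)=0$; the reduction from cycles to schemes and the comparison of kernels are formal.
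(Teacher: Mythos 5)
Your proof is correct and follows essentially the same route as the paper's: both reduce to showing that a generic fiber of $W\mapsto Z_W$ is a single point, identify that fiber inside $\mathbf{P}(\mathrm{Hom}(F,\mathcal{I}_Z\otimes\mathcal{L}))$ via the kernel of the quotient map in (\ref{exact}), and then use the long exact sequence obtained from (\ref{exact}) together with $\mathrm{End}(F)=\mathbb{C}$ and $H^1(X,F^*)=0$ to conclude that $\mathrm{Hom}(F,\mathcal{I}_Z\otimes\mathcal{L})$ is one-dimensional. The only cosmetic difference is that you phrase the reduction through the Hilbert--Chow fiber over a reduced cycle being a single point, where the paper notes that $c$ is a local isomorphism there; the substance is identical.
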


\begin{proof} Let $G^0\subset G$
be the Zariski open set parameterizing the
subspaces $W\subset H^0(X,F)$ of dimension $r-1$ satisfying
the conclusions of Lemma \ref{necessary}. Note that $c$ is a local isomorphism at a point $Z_W$ of $X^{[k]}$ consisting
of $k$ distinct points,
so that the dimension of the image of $\phi$ is equal to the
dimension of the image of the rational map $G\dashrightarrow X^{[k]},\,W\mapsto Z_W$,
which we will also denote by $\phi$.
This $\phi$ is a morphism on $G^0$ and
it suffices to show
that the map $ \phi^0:=\phi_{\mid G^0}$ is injective.
Let $W\in G^0,\,Z:=\phi(W)$. For any $W'\in {\phi^0}^{-1}(Z)$, we have an exact
sequence as in (\ref{exact}):
\begin{eqnarray}\label{exact'}
0\rightarrow W'\otimes \mathcal{O}_X\rightarrow F\rightarrow \mathcal{I}_Z\otimes \mathcal{L}\rightarrow0,\end{eqnarray}
so that $W'$ determines a morphism
$$t_{W'}:F\rightarrow \mathcal{I}_{Z}\otimes\mathcal{L},$$
and conversely, we recover $W'$ from the data of $t_{W'}$ up to a scalar as the space of sections of
${\rm Ker}\,t_{W'}\subset F$.
We thus have an injection of the fiber ${\phi^0}^{-1}(Z)$ into
$\mathbf{P}({\rm Hom}\,(F,\mathcal{I}_{Z}\otimes\mathcal{L}))$.

In order to compute ${\rm Hom}\,(F,\mathcal{I}_{Z}\otimes\mathcal{L})$, we
tensor by $F^*$ the exact sequence (\ref{exact}).
We then get the long exact sequence:
\begin{eqnarray}\ldots\rightarrow {\rm Hom}\,(F,F)\rightarrow {\rm Hom}\,(F,\mathcal{I}_{Z}\otimes\mathcal{L})\rightarrow H^1(X,F^*\otimes W).
\end{eqnarray}
By the vanishing (\ref{van}) we conclude that
the map $${\rm Hom}\,(F,F)\rightarrow {\rm Hom}\,(F,\mathcal{I}_{Z}\otimes\mathcal{L})$$
is surjective. As $F$ is simple, the left hand side is generated by $Id_F$, so the
right hand side is generated by $t_W$. The fiber ${\phi^0}^{-1}(Z)$ thus consists of one point.

\end{proof}

\begin{proof}[Proof of Theorem \ref{ogrady}] Let $F$ be a simple nontrivial
globally generated vector bundle of rank $r$, with $h^1(F)=0$ and
with Mukai vector
$$v=v_F=(r,l,s)\in H^*(X,\mathbf{Z}).$$
This means that $r={\rm rank}\,F$, $l=c_1^{top}(F)\in H^2(X,\mathbf{Z})$ and
\begin{eqnarray}\label{derder}\chi(X,{\rm End}\,F)=<v,v^*>=2rs-l^2.
\end{eqnarray}
The Riemann-Roch formula applied to ${\rm End} \,F$
gives
\begin{eqnarray}\label{der}\chi(X,{\rm End}\,F)=2r^2+(r-1)l^2-2rc_2^{top}(F),
\end{eqnarray}
hence we get the formula (which can be derived as well from the definition
(\ref{vraimukai})) :
\begin{eqnarray}\label{for0}s=r+\frac{l^2}{2}-c_2^{top}(F).
\end{eqnarray}
We also have by definition of $d(v)$
 $$\chi(X,{\rm End}\,F)=2-2d(v)$$
  and thus  by (\ref{derder}),
\begin{eqnarray}\label{for1}d(v)=1-rs+\frac{l^2}{2}.
\end{eqnarray}

The Riemann-Roch formula applied to $F$ gives on the other hand:
\begin{eqnarray}\label{for2}\chi(X,F)=2r+\frac{l^2}{2}-c_2^{top}(F)
\end{eqnarray}
which by (\ref{for0}) gives
\begin{eqnarray}\label{for3}\chi(X,F)=r+s.
\end{eqnarray}
As we assume $h^1(F)=0$ and we have $h^2(F)=0$ since $F$ is
nontrivial, generated by sections and simple, we thus get
\begin{eqnarray}\label{for3bis}h^0(X,F)=r+s.
\end{eqnarray}
With the notations introduced above, we conclude that
$${\rm dim}\,G=(r-1)(s+1).$$
By Proposition \ref{propinj}, as all cycles parameterized by ${\rm Im}\,\phi$
 are rationally equivalent in $X$, the orbit under rational equivalence of
$c_2(F)$ in $X^{(k)}$, $k=c_2^{top}(F)$, has dimension greater than or equal to
$$ (r-1)(s+1)=rs-s+r-1.$$

But we have by (\ref{for0}) and (\ref{for1}):
$$k-d(v)=r-s+rs-1.$$

By Theorem \ref{autreforme},  we conclude that
$c_2(F)\in S^k_{d(v)}(X)$.

\end{proof}
\begin{remark}{\rm Instead of proving that the general $Z_W$ is reduced and applying
Theorem \ref{autreforme}, we could as well apply directly the variant \ref{variant}
to the family of subschemes $Z_W$.}
\end{remark}
For completeness, we  conclude this section with the proof of
Corollary \ref{corogrady}, although a large part of it  mimics the
proof of Proposition \ref{propogrady} in \cite{ogrady}.

We recall for convenience the statement:
\begin{corollary} \label{corointrobis}Let $v\in H^*(X,\mathbf{Z})$ be a Mukai vector. Assume there exists a simple vector bundle
$F$
on $X$ with Mukai vector $v$. Then
$$S_d^k(X)= \{c_2({G}), \,\,G\,\,{\rm a\,\, simple\,\, vector\,\, bundle\,\,on}\,\,X,\,\,v_G=v\},$$
where $d=d(v),\,k=c_2(v):=c_2^{top}(F),\,v_F=v$.

\end{corollary}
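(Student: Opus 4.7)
The ``$\supseteq$'' inclusion is immediate from Theorem \ref{ogrady}: any simple vector bundle $G$ with $v_G = v$ satisfies $c_2(G) \in S_{d(v)}(X)$, and has degree $k$, so $c_2(G) \in S_d^k(X)$.

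For the ``$\subseteq$'' inclusion, the plan is to mimic O'Grady's proof of Proposition \ref{propogrady}, replacing the moduli space of $H$-stable sheaves by the moduli space $\mathcal{M}$ of simple vector bundles on $X$ with Mukai vector $v$. By hypothesis $\mathcal{M}$ is non-empty; standard deformation theory shows it is smooth of dimension $2d$, carrying the Mukai holomorphic symplectic 2-form. Let $\gamma : \mathcal{M} \to CH_0(X)$, $G \mapsto c_2(G)$; by Theorem \ref{ogrady}, $\gamma(\mathcal{M}) \subseteq S_d^k(X)$, and the aim is to prove equality.

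Passing to the dense open locus $\mathcal{M}^0 \subset \mathcal{M}$ of globally generated $G$ with $h^1(G^*) = 0$ (harmless after twisting by an ample line bundle, as at the start of Section \ref{sec2}), Lemma \ref{necessary} gives a Grassmann-bundle incidence variety
\[
\mathcal{I} = \{(G, W) : G \in \mathcal{M}^0,\; W \in \mathrm{Grass}(r-1, H^0(G))\}
\]
of dimension $2d + (k-d) = k+d$, with evaluation morphism $p : \mathcal{I} \to X^{[k]}$, $(G, W) \mapsto Z_W$. By Theorem \ref{ogrady}, $p(\mathcal{I}) \subseteq S_{\mathrm{eff}}^{[k]} := \{Z \in X^{[k]} : [Z] \in S_d^k(X)\}$, and the Remark after Corollary \ref{corochar} shows that every irreducible component of $S_{\mathrm{eff}}^{[k]}$ has dimension $k+d$, being swept by $X^{[d]} \times \Lambda \to X^{[k]}$ for $\Lambda$ an irreducible component of the $(k-d)$-dimensional orbit of $(k-d) c_X$ in $X^{[k-d]}$.

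The crux is to show $p$ is dominant onto the relevant components of $S_{\mathrm{eff}}^{[k]}$; once this is established, every generic $z \in S_d^k(X)$ arises as $c_2(G)$ for some $G \in \mathcal{M}^0$, and the non-generic classes then follow by applying Variant \ref{variant} of Theorem \ref{char} to the closure of $\gamma(\mathcal{M})$ in $X^{[k]}$. The main obstacle is the dominance step, which by the matching of dimensions $\dim \mathcal{I} = \dim S_{\mathrm{eff}}^{[k]} = k+d$ reduces to showing the generic fiber of $p$ is zero-dimensional. Using Proposition \ref{propinj} to pin down $W$ from $(G, Z)$, one is reduced to bounding the dimension of $\gamma^{-1}([Z]) \subset \mathcal{M}$ for generic $Z$. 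Here one invokes the Mumford--Bloch--Srinivas argument applied to the universal bundle on $\mathcal{M} \times X$: since $c_2$ is constant on $\gamma^{-1}([Z])$, this fiber is isotropic for the Mukai symplectic form, hence of dimension $\leq d$; a finer analysis, which is the analog of O'Grady's rank-of-the-Mukai-form computation in his proof of Proposition \ref{propogrady}, then pins the generic dimension down to $0$, yielding the required dominance.
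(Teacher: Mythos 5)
Your ``$\supseteq$'' direction matches the paper. For ``$\subseteq$'' there are two genuine gaps. First, the dominance step is not actually carried out: you assert that a ``finer analysis'' pins the generic fiber of $p:\mathcal{I}\to X^{[k]}$ down to dimension $0$, but the only bound you derive from the Mumford--Bloch--Srinivas/isotropy argument is that $\gamma^{-1}([Z])$ has dimension $\leq d$, which is compatible with $p$ having $d$-dimensional fibers and failing to dominate a $(k+d)$-dimensional component of $S_{\mathrm{eff}}^{[k]}$; moreover the claim that every component of $S_{\mathrm{eff}}^{[k]}$ has dimension exactly $k+d$ is itself unproved (it is a countable union of closed sets swept out by orbits whose dimensions you have only bounded from below). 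The paper sidesteps this entirely: it forms the incidence $R\subset Y\times X^{(d)}$, where $Y$ is the $2d$-dimensional deformation space of $F$, picks a component $R_0$ dominating $Y$ by a Baire argument, and shows the second projection $R_0\to X^{(d)}$ is dominant purely from the fact that the pulled-back Mukai form has rank $2d$ while a $2$-form pulled back through a map of generic rank $m$ has rank $\leq m$. No generic finiteness of any fiber is needed.

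Second, and more seriously, even granting dominance you only obtain that a \emph{generic} point of $X^{(d)}$ (equivalently, a dense open $U\subset X^{(d)}$) yields classes $cl(z)+(k-d)c_X$ of the form $c_2(G)$. To conclude that \emph{every} class in $S_d^k(X)$ is attained you must show $cl(U)=cl(X^{(d)})$ in $CH_0(X)$; this is the paper's Lemma \ref{lemmafinal}, which rests on Maclean's theorem that the set of points rationally equivalent to a given $x\in X$ is dense in $X$. Your proposed substitute --- applying Variant \ref{variant} to the closure of $\gamma(\mathcal{M})$ --- goes in the wrong direction: that variant shows a $(k-d)$-dimensional constant-cycle family has class in $S_d^k(X)$, i.e.\ it reproves the inclusion $\subseteq$ of the image in $S_d^k(X)$, and says nothing about surjectivity onto the non-generic classes. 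This density step is indispensable and is missing from your argument.
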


\begin{proof} The inclusion
\begin{eqnarray}\label{*}\{c_2({G}), \,\,G\,\,{\rm a\,\, simple\,\, vector\,\,
bundle\,\,on}\,\,X,\,\,v_G=v\}\subset S_d^k(X)
\end{eqnarray}
is the content of Theorem
\ref{ogrady}.

For the reverse inclusion, we first prove that there exists a Zariski open set
$U\subset X^{(d)}$ such that
\begin{eqnarray}\label{inclusionU}
cl(U)+(k-d(v))c_X\subset \{c_2({G}), \,\,G\,\,{\rm a\,\, simple}\\
\nonumber\, {\rm vector\,\, bundle\,\,on}\,\,X,\,\,v_G=v\}
\end{eqnarray}
where $cl: X^{(d)}\rightarrow CH_0(X)$ is the cycle map.

As $F$ is simple, the local deformations of $F$ are unobstructed.
 Hence there exist a smooth connected quasi-projective variety
$Y$, a locally free  sheaf $\mathcal{F}$ on $Y\times X$ and a point
$y_0\in Y$ such that $\mathcal{F}_{y_0}\cong F$ and the Kodaira-Spencer map
$$\rho: T_{Y,y_0}\rightarrow H^1(X,{\rm End}\,F)$$
is an isomorphism.

As $\mathcal{F}_{y_0}$ is simple, so is $\mathcal{F}_y$ for $y$ in a
dense  Zariski open set  of $Y$. Shrinking $Y$ if necessary,
$\mathcal{F}_y$ is simple for all $y\in Y$. By Theorem \ref{ogrady},
we have $c_2(\mathcal{F}_y)\in S_{d(v)}(X)$ for all $y\in Y$.

Let $\Gamma:=c_2(\mathcal{F})\in CH^2(Y\times X)$.
Consider the following set $R\subset Y\times X^{(d(v))}$
$$R=\{(y,z),\,\Gamma_*(y)=c_2(\mathcal{F}_y)=cl(z)+(k-d(v))c_X\,\,{\rm in}\,\,CH_0(X)\},$$
where $cl: X^{(d(v))}\rightarrow CH_0(X)$ is the cycle map and $k=c_2(v)$.

$R$ is a countable union of closed algebraic subsets of
$Y\times X^{(d)}$ and by the above inclusion (\ref{*}), the first projection
$$R\rightarrow Y$$
is surjective. By a Baire category argument, it follows that for
some component $R_0\subset R$, the first projection is dominant.

We claim that the second projection $R_0\rightarrow X^{(d(v))}$ is
also dominant. This follows from the fact that by Mumford's theorem,
the pull-back to $R_0$ of the holomorphic $2$-forms on $Y$ and
$X^{(d(v))}$ are equal. As the first projection is dominant and the
Mukai form on $Y$ has rank $2d(v)$, the same is true for its
pull-back to $R_0$ (or rather its smooth locus). Hence the pull-back
to $R_0$ of the symplectic form on $X^{(d(v))}$ by the second
projection also has rank $2d(v)$. This implies that the second
projection is dominant hence that its image contains a Zariski open
set. Thus (\ref{inclusionU}) is proved. The proof of Corollary
\ref{corointrobis} is then concluded with Lemma \ref{lemmafinal}
below.

\end{proof}
\begin{lemma}\label{lemmafinal} Let $X$ be a $K3$ surface and $d>0$ be an integer.
Then for any open set (in the analytic or Zariski topology) $U\subset
X^{(d)}$, we have
$$cl(U)=cl(X^{(d)})\subset CH_0(X).$$

\end{lemma}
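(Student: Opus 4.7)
The plan is to prove the equivalent statement that for every $z \in X^{(d)}$, the orbit $O_z := \{z' \in X^{(d)} : z' \sim z \text{ in } CH_0(X)\}$ is dense in $X^{(d)}$, in either the analytic or Zariski topology; indeed $cl(U) = cl(X^{(d)})$ holds for every nonempty open $U$ if and only if every orbit meets every nonempty open $U$.

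First I would reduce to the case $d = 1$: given $z = x_1 + \cdots + x_d$, any choice of $y_i \in O_{x_i}$ yields $\sum_i y_i \in O_z$ by additivity of rational equivalence, so $O_z$ contains the image of $O_{x_1} \times \cdots \times O_{x_d}$ under the quotient $X^d \to X^{(d)}$, which is dense provided each $O_{x_i} \subset X$ is dense. For the case $d = 1$, given $x \in X$, I would invoke the $1$-parameter family $\{E_t\}_{t \in T}$ of (possibly singular) elliptic curves on $X$ covering $X$ (from Mori--Mukai, as used in the proof of Lemma \ref{corodense}), pick $t_0$ with $x \in E_{t_0}$, and observe that the $N$-torsion translates of the preimage of $x$ on the normalization $\widetilde{E}_{t_0}$ push forward to points of $E_{t_0} \subset X$ rationally equivalent to $x$ in $CH_0(X)$ (using that $CH_0(X)$ is torsion-free, by Roitman); these form an analytically dense countable subset of $E_{t_0}$ inside $O_x$.

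The main obstacle is upgrading density in a single curve $E_{t_0}$ to density in $X$ itself. For this I would iterate: for each torsion translate $y$ of $x$ on $E_{t_0}$, select $t_1(y) \neq t_0$ with $y \in E_{t_1(y)}$, which exists generically because the total-space map $\mathcal{E} \to X$ has generic fiber of cardinality $\geq 2$ whenever $X$ is not elliptically fibered (a $1$-to-$1$ map would realize an elliptic fibration on $X$). On each new curve $E_{t_1(y)}$, the torsion translates of $y$ are again in $O_x$ and are analytically dense in $E_{t_1(y)}$; as $y$ varies, the curves $E_{t_1(y)}$ sweep out a $2$-dimensional subvariety of $X$ which by irreducibility must be all of $X$, so any open neighborhood $V \subset X$ of a given point contains an open piece $V \cap E_{t_1(y)}$ in which the torsion translates of $y$ accumulate, producing a point of $O_x \cap V$. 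For an elliptically fibered $K3$, where the given family realizes the fibration with generic fiber-to-$X$ degree $1$, one replaces the single family by a second Mori--Mukai family arising from a polarization not proportional to a fiber class; such a family has generic fiber-to-$X$ degree $\geq 2$, so the same iteration goes through.
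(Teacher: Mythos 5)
Your reduction to $d=1$ is the same as the paper's, and your torsion-translate argument producing an analytically dense subset of $O_x\cap E_{t_0}$ is sound (it is the same device used in the proof of Lemma \ref{corodense}). But where the paper disposes of the case $d=1$ in one line by citing \cite{maclean} for the statement that every orbit $O_x$ is dense in $X$ for the usual topology, you undertake to reprove that density, and it is precisely in the globalization step --- passing from density in one curve $E_{t_0}$ to density in $X$ --- that your argument has genuine gaps.

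Concretely: (1) the points $y$ you propagate from form a countable set (torsion translates of $x$), so the curves $E_{t_1(y)}$ are a countable family; their union is not ``a $2$-dimensional subvariety of $X$'' and the appeal to irreducibility proves nothing. What you actually need is that this countable union of curves is analytically dense in $X$, which requires a further argument (e.g.\ density of the parameters $t_1(y)$ in the base $T$ of the family, together with openness and surjectivity of the evaluation map $\mathcal{E}\to X$). (2) More seriously, your dichotomy is not exhaustive: if $\mathcal{E}\to X$ is generically injective, you obtain a one-dimensional family of genus-one curves with exactly one member through a general point, i.e.\ a genus-one \emph{pencil with base points} (the members do pairwise meet, since their self-intersection is positive, but a priori possibly only along a fixed base locus); this realizes an elliptic fibration on a blow-up of $X$, not on $X$ itself, so ``degree $\geq 2$ or $X$ elliptically fibered'' does not cover all cases. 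In the genuinely fibered case you also assert without proof that a second Mori--Mukai family has degree $\geq 2$ onto $X$, and even granting degree $\geq 2$ you must arrange that the second curve through $y$ is a nondegenerate member of geometric genus one, since the torsion argument requires the normalization to be an elliptic curve. All of these difficulties are exactly what the reference to \cite{maclean} is doing for the paper; replacing your globalization step by that citation closes the gap and recovers the paper's proof.
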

\begin{proof} It clearly suffices to prove the result for $d=1$. It is proved in
\cite{maclean} that for any point $x\in X$, the set of points $y\in X$ rationally equivalent to $x$ in $X$ is dense in $X$ for the usual topology. This set thus
meets $U$, so that $x\in cl(U)$.

\end{proof}

 Centre de math\'{e}matiques Laurent Schwartz

91128 Palaiseau C\'{e}dex

 France

\smallskip
 voisin@math.polytechnique.fr
    \end{document}